\theoremstyle{plain}
\newtheorem{thm}{Theorem}[section]
\newtheorem{prop}[thm]{Proposition}
\newtheorem{cor}[thm]{Corollary}
\theoremstyle{definition}
\newtheorem{defn}[thm]{Definition}
\newtheorem{exmp}[thm]{Example}
\newtheorem{Note}[thm]{Note}
\numberwithin{figure}{section}
\numberwithin{table}{section}
\DeclareMathOperator*{\esssup}{ess\,sup}
\newcommand{\lspace} {
  \vspace{0.8\baselineskip}
}
\newcommand{\abs}[1]{
  \lvert  #1 \rvert
}
\newcommand{\norm}[1]{
  \|  #1 \|
}
\newcommand{\twc}[2]{ 
  [ #1 ]_{ #2 }
}
\newcommand{\var}{\mathrm{Var}}
\definecolor{arrowred}{rgb}{0,0,0} 
\newcommand{\newword}[1]{\textbf{\textit{#1}}}
\newcommand{\textred}[1]{#1}
\newcommand{\textblue}[1]{#1}
\numberwithin{equation}{section}
\newcommand{\Prob}{\mathbf{Prob}}
\newcommand{\Set}{\mathbf{Set}}
\newcommand{\Cat}{\mathbf{Cat}}
\newcommand{\Ord}{\mathbf{Ord}}
\newcommand{\Ban}{\mathbf{Ban}} 
\newcommand{\Top}{\mathbf{Top}}
\newcommand{\Mble}{\mathbf{Mble}}
\newcommand{\im}{\mathrm{im}}
\newcommand{\bDelta}{\mathbf{\Delta}}
\newcommand{\bSigma}{\mathbf{\Sigma}}
\mathchardef\mhyphen="2D  
\title{A geometric model of synthetic filtrations via context-dependent time}
\thanks{This work was supported by JSPS KAKENHI Grant Number 24K04941.} 
\author[T. Adachi]{Takanori Adachi}
\address{Graduate School of Management,
         Tokyo Metropolitan University,
         Marunouchi Eiraku Bldg. 18F, 1-4-4 Marunouchi, Chiyoda-ku, Tokyo 100-0005 Japan}
\email{Takanori Adachi <taka.adachi@tmu.ac.jp>}
\date{\today}
\keywords{
	categorical probability theory,
	subjective filtration,
	generalized filtration,
	Dirichlet distribution,
	Bayesian statistics,
	simplicial geometry
}
\subjclass[2000]{
  Primary 
    60A99,   
    16B50;   
  secondary
    60G20,   
    91B82,   
    18B99   
}
\begin{document}

\maketitle

\begin{abstract}
Classical filtrations in probability theory formalize the accumulation of information 
along a linear time axis: 
the past is unique and the present evolves into an uncertain future. 
In reality, however, 
this linearity may itself be an illusion
— an artifact of human perception 
that collapses multiple possible histories into a single apparent path.

In this paper, 
we propose a geometric and homological model of synthetic filtrations, 
where the present arises as a synthesis of many potential pasts. 
To achieve this, 
we introduce a new category $\bSigma$, 
extending the simplex category $\bDelta$ 
so that each moment of time carries contextual structure. 
Synthetic filtrations are realized as contravariant functors 
$\bSigma^{op} \to \Prob$, 
where $\Prob$ is the category of probability spaces with null-preserving maps.

We then develop a homological analysis of $\bSigma$-filtrations, 
constructing chain complexes whose boundaries are given by conditional expectations. 
Their homology groups measure informational ``holes’’ 
— probabilistic obstructions arising from the incompatibility of contextual expectations.

As a concrete realization, 
we define Dirichlet filtrations, 
in which measures on simplices arise from Dirichlet distributions, 
reflecting both parameter and contextual uncertainty. 
Bayesian updating is then interpreted as a categorical 
transformation of a Dirichlet functor, 
revealing learning as a reconstruction of coherence across contexts.

This framework suggests that 
what appears as a linear temporal order is merely a projection of a higher contextual geometry.
It unifies categorical probability, homological algebra, and Bayesian reasoning, 
offering a new language for uncertainty in mathematics, finance, and cognition.
\end{abstract}


\section{Introduction}
\label{sec:introduction}
Filtrations are a cornerstone of probability theory, 
providing a formal description of how information accumulates over time. 
In the classical framework, 
time is modeled linearly: 
the past forms a unique path leading to the present, 
and the future extends from it. 
This view, while mathematically powerful, is rooted in a perceptual simplification. 
The idea that the past is a single, 
well-defined history may itself be an illusion — 
a projection created by the limitations of human cognition. 
Our sense of temporal coherence emerges from collapsing multiple possible histories 
into a single narrative, 
even though, in reality, the present may be shaped by several competing or overlapping contexts.

This observation motivates a reformulation of the concept of filtration. 
Instead of treating time as a linear index that unfolds uniquely, 
we consider a context-dependent notion of time, 
in which the present synthesizes information from multiple contextual pasts. 
To formalize this idea, 
we introduce a new category $\bSigma$, 
which extends the simplex category $\bDelta$ 
by incorporating contextual information into each time index. 
Objects of $\bSigma$ represent “times in context,” 
while morphisms encode transitions that preserve contextual compatibility.

A $\bSigma$-filtration is then defined as a contravariant functor
$
X : \bSigma^{op} \to \Prob 
$,
where $\Prob$ is the category of probability spaces with null-preserving maps. 
This categorical framework generalizes the traditional notion of filtrations 
$
T^{op} \to \Prob
$,
allowing us to model information flow over a branching, context-sensitive geometry 
rather than a single ordered chain. 
In this sense, 
stochastic processes become sections over a contextual time manifold, 
rather than trajectories along a one-dimensional timeline.
Beyond its categorical construction, 
a $\bSigma$-filtration admits a homological structure. 
Conditional expectation operators between probability spaces 
can be arranged into a chain complex, 
yielding homology groups 
that quantify probabilistic ``holes’’ — obstructions or redundancies 
in how expectations are coherently transmitted across contexts. 
This homological analysis of $\bSigma$-filtrations provides algebraic invariants 
that measure inconsistency or coherence in the evolution of information, 
analogous to topological cycles in geometry.

The geometric and homological aspects of our model 
jointly reveal a deeper view of uncertainty. 
Stochastic variability arises not only from randomness within each probability space 
but also from the multiplicity of possible contextual pasts. 
Thus, 
uncertainty itself acquires a topological dimension.

As a concrete realization, 
we introduce Dirichlet filtrations, 
where measures on simplices are induced by Dirichlet distributions. 
These filtrations capture both parameter uncertainty
— variability within probabilistic models —
and contextual uncertainty
— ambiguity over which contextual branch has led to the present. 
Furthermore, 
we show that Bayesian updating can be interpreted as a categorical 
update of a Dirichlet functor, 
representing learning as a process of restoring coherence across contexts.

\begin{figure}
\begin{tikzpicture}
	\draw (0,0) arc (170:10:1.5cm and 0.4cm)coordinate[pos=0] (e);
	\draw (0,0) arc (-170:-10:1.5cm and 0.4cm)coordinate (f);
	\draw (e) -- ([yshift=-2.5cm]$(e)!0.5!(f)$) -- (f);
	\draw (1.5, -2.5) coordinate(g) node[right] {now}  -- (1.5, -5.3);

	\draw (0.2,-6) node[right] {Classical Model};

	\draw (5,0) arc (170:10:1.5cm and 0.4cm)coordinate[pos=0] (a);
	\draw (5,0) arc (-170:-10:1.5cm and 0.4cm)coordinate (b);
	\draw (a) -- ([yshift=-2.5cm]$(a)!0.5!(b)$) -- (b);

	\draw (6.5, -2.5) coordinate(g) node[right] {now};
	
	\draw[dashed] (5,-5) arc (170:10:1.5cm and 0.4cm)coordinate[pos=0] (c);
	\draw (5,-5) arc (-170:-10:1.5cm and 0.4cm)coordinate (d);
	\draw (c) -- ([yshift=2.5cm]$(c)!0.5!(d)$) -- (d);

	\draw (5.2,-6) node[right] {Our Model};

	\draw [->, very thin] (10, -5.3) -- (10,0) coordinate(g) node[right] {time};
\end{tikzpicture}

\label{fig:syntheticModel}
\caption{Synthetic model}
\end{figure}

The structure of the paper is as follows.
In section \ref{sec:Tfiltration},
we review categorical probability theory and 
generalized filtrations ($T$-filtrations) as a foundation.
In sectin \ref{sec:syncFilt},
we define the category $\bSigma$ of context-dependent time
and show how it extends the simplex category $\bDelta$.
We construct $\bSigma$-filtrations as contravariant functors 
$\bSigma^{op} \to \Prob$.
In section \ref{sec:homoSigma},
we develop a homological theory of $\bSigma$-filtrations, 
defining boundary operators from conditional expectations 
and interpreting their homology as algebraic measures of informational inconsistency.
In section \ref{sec:DirichletFilt},
we introduce Dirichlet filtrations, 
where probability measures on simplices arise from Dirichlet distributions, 
and interpret Bayesian updating as a categorical transformation of Dirichlet functors.

This geometric and homological perspective unifies ideas from category theory, 
simplicial geometry, homological algebra, 
and Bayesian probability. 
Beyond its conceptual novelty, 
it opens new directions for studying uncertainty 
in stochastic modeling, subjective probability, and mathematical finance.

\section{Review of $\mathcal{T}$-filtrations}
\label{sec:Tfiltration}

Before developing the context-dependent framework of $\bSigma$-filtrations, 
we first recall the foundations on which it is built. 
Classical filtrations are modeled as contravariant functors
from a time category to the category of probability spaces, 
and this idea extends naturally to the notion of a $\mathcal{T}$-filtration, 
where the time domain is an arbitrary small category $\mathcal{T}$.

In this section, 
we briefly review the categorical formulation of probability spaces 
together with conditional expectation operators defined on them,
and the definition of $\mathcal{T}$-filtrations 
introduced in 
\cite{AR_2019}
and
\cite{ANR_2020b}.
This review provides the structural background 
for the later generalization to $\bSigma$, 
which incorporates contextual dependence into the notion of time.

\subsection{Category of probability spaces}
\label{sec:catProb}

Let
$
X
	=
(\Omega_X, \mathcal{F}_X, \mathbb{P}_X)
$
and
$
Y
	=
(\Omega_Y, \mathcal{F}_Y, \mathbb{P}_Y)
$
be two probability spaces
throughout this section.

\begin{defn}{[Category $\Prob$]}
\label{defn:nullPres}
\begin{enumerate}
\item
A measurable map
$
f : 
(\Omega_X, \mathcal{F}_X)
	\to
(\Omega_Y, \mathcal{F}_Y)
$
is called
\newword{null-preserving}
if
\begin{equation}
\label{eq:nullPres}
\mathbb{P}_X \circ f^{-1}
	\ll
\mathbb{P}_Y
\quad
(\textrm{absolutely continuous}) .
\end{equation}
In other words,
it preserves negligible events under pushforward measures.

\item
$
\Prob
$
:
the category of probability spaces with null-preserving maps
\begin{itemize}
  \item
    $
    \mathcal{O}_{\Prob}
     :=
    $
    collection of all probability spaces,

  \item
    $
    \mathcal{M}_{\Prob}
     :=
    $
    collection of all null-preserving maps between probability spaces.
\end{itemize}

\end{enumerate}
\end{defn}

Note that any measure-preserving map is null-preserving.

\lspace

Other than $\Prob$,
in this paper,
we use the notations
$\Set$,
$\Cat$,
$\Top$,
$\Mble$,
$\Ord$
and
$\Ban$
for denoting categories of
sets,
small categories,
topological spaces,
measurable spaces,
preordered sets
and
Banach spaces,
respectively.
We also use
$
i_{\mathrm{Ord}}
	:
\Ord
	\to
\Cat
$
for the natural embedding functor,
and use
$
U_{\mathrm{PM}}
	:
\Prob
	\to
\Mble
$
for the forgetful functor.

\begin{defn}{[Functor $L^p$]}
\label{defn:functorL}
\begin{enumerate}
\item
A functor
$
\mathcal{L} : \Mble^{op} \to \Ban
$
is defined by
$
\mathcal{L}
	:=
\Mble(-, \mathcal{B}(\mathbb{R}))
$,
where
$\Ban$
is the category of Banach spaces,
$\mathbb{R}$ is considered as a (usual) topological space
of real numbers,
and
$   
\mathcal{B}
    :
\Top
    \to
\Mble
$
is the functor
that maps
every topological space
$(X, O_X)$
to the measurable space
$
\mathcal{B}((X, O_X))
	:=
(X, \mathcal{B}(O_X)) 
$.

\item
A functor
$
L
	:
\Prob^{op} \to \Ban
$
is defined by
\begin{equation*}
\xymatrix @C=30 pt @R=10 pt {
	\Prob^{op}
		\ar @{->}^{
			L
	} [rr]
&&
	\Ban
\\
	X
     \ar @{->}_{\varphi} [dd]
&&
	(\mathcal{L} \circ U_{PM})X / \sim_{\mathbb{P}_X}
		\ar @{}^{\ \ \ \ni} @<-6pt> [r]
&
	[g \circ \varphi]_{\sim_{\mathbb{P}_X}}
\\\\
	Y
&&
	(\mathcal{L} \circ U_{PM})Y / \sim_{\mathbb{P}_Y}
		\ar @{}^{\ \ \ \ni} @<-6pt> [r]
		\ar @{->}_{
			L(\varphi)
		} [uu]
&
	[g]_{\sim_{\mathbb{P}_Y}}
     \ar @{|->} [uu]
}
\end{equation*}
where 
$
\sim_{\mathbb{P}_X}
$
is an equivalence relation on 
$
	(\mathcal{L} \circ U_{PM})X
$
defined by
for
$
f, g \in
	(\mathcal{L} \circ U_{PM})X
$,
$
f
	\sim_{\mathbb{P}_X}
g
$
	iff
$
f = g
\quad
(\mathbb{P}_X\mathrm{-a.s.}).
$

\item
For 
$1 \le p < \infty$
A functor
$
L^{p}
	:
\Prob^{op} \to \Ban
$
is defined by
\begin{equation}
\label{eq:LpX}
L^{p} X
	:=
\{
	[f]_{\sim_{\mathbb{P}_X}}
		\in
	LX
\mid
	\norm{f}_p
		:=
	\Big(
		\int_{\Omega_X}
			\abs{f}^p
		d \mathbb{P}_X
	\Big)^{1/p}
		<
	\infty
\}.
\end{equation}

\item
A functor
$
L^{\infty}
	:
\Prob^{op} \to \Ban
$
is defined by
\begin{equation}
\label{eq:Linfty}
L^{\infty} X
	:=
\{
	[f]_{\sim_{\mathbb{P}_X}}
		\in
	LX
\mid
	\esssup_{\mathbb{P}_X} \abs{f}
		<
	\infty
\}.
\end{equation}

\end{enumerate}
\end{defn}

\begin{thm}{\normalfont{(\cite{AR_2019})}}
\label{thm:condExp}
Let
$\varphi : X \to Y$
be a null-preserving map.
For any random variable
$
\textblue{f}
	\in
L^{1}X
$,
there exists a
random variable
$g$
on
$Y$
such that
for every 
$
\textblue{B}
	\in
\mathcal{F}_Y
$,
\begin{equation}
\label{eq:condExp}
\int_B
	g \,
d \mathbb{P}_Y
	=
\int_{\varphi^{-1}(B)}
	f \,
d \mathbb{P}_X .
\end{equation}
We write 
$
\textblue{E^{\varphi}[f]}
$
for
the random variable
$g$,
and call it a
\newword{conditional expectation}
of
$f$
along
$\varphi$.
\end{thm}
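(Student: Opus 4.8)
The plan is to reduce the existence of $g$ to a single application of the Radon--Nikodym theorem, with the null-preserving hypothesis on $\varphi$ serving precisely to supply the absolute continuity that Radon--Nikodym demands. The construction of $g$ is indirect: rather than build it pointwise, I would exhibit it as the density of an auxiliary measure on $Y$ built by pushing $f\,d\mathbb{P}_X$ forward along $\varphi$.

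First I would define a set function $\nu : \mathcal{F}_Y \to \mathbb{R}$ by
\[
  \nu(B) := \int_{\varphi^{-1}(B)} f \, d \mathbb{P}_X ,
\]
which is well defined because $\varphi$ is measurable, so $\varphi^{-1}(B) \in \mathcal{F}_X$, and because $f \in L^1 X$. I would then verify that $\nu$ is a finite signed measure. Finiteness is immediate from $\abs{\nu(B)} \le \int_{\Omega_X} \abs{f}\, d\mathbb{P}_X < \infty$. Countable additivity follows from the fact that preimages commute with disjoint unions, $\varphi^{-1}\big(\bigsqcup_n B_n\big) = \bigsqcup_n \varphi^{-1}(B_n)$, combined with countable additivity of the integral of an $L^1$ integrand (justified by dominated convergence applied to the partial sums, dominated by $\abs{f}$).

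The key step is to show $\nu \ll \mathbb{P}_Y$, and this is exactly where the hypothesis on $\varphi$ enters. Suppose $\mathbb{P}_Y(B) = 0$. The null-preserving condition \eqref{eq:nullPres} states $\mathbb{P}_X \circ \varphi^{-1} \ll \mathbb{P}_Y$, so $\mathbb{P}_X(\varphi^{-1}(B)) = (\mathbb{P}_X \circ \varphi^{-1})(B) = 0$. Integrating $f$ over the $\mathbb{P}_X$-null set $\varphi^{-1}(B)$ then forces $\nu(B) = 0$. I regard this as the conceptual heart of the theorem: null-preservation is tailored so that the pushed-forward measure $\nu$ is absolutely continuous with respect to $\mathbb{P}_Y$, and without it no density could be expected to exist.

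Finally, I would invoke the Radon--Nikodym theorem for the finite signed measure $\nu$ against the probability measure $\mathbb{P}_Y$ to obtain a density $g \in L^1 Y$ satisfying $\nu(B) = \int_B g \, d\mathbb{P}_Y$ for every $B \in \mathcal{F}_Y$; comparing with the definition of $\nu$ yields \eqref{eq:condExp}. I do not anticipate a genuine obstacle here. The only point requiring mild care is the clean verification of countable additivity of $\nu$ (the interchange of summation and integration), which is routine for $L^1$ integrands. The substantive content of the proof is the single observation that null-preservation is equivalent to $\nu \ll \mathbb{P}_Y$; everything else is the standard machinery behind classical conditional expectation, here recovered as the special case in which $\varphi$ is an inclusion onto a sub-$\sigma$-algebra.
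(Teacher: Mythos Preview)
Your proposal is correct and essentially identical to the paper's proof: the paper defines $f^*(A):=\int_A f\,d\mathbb{P}_X$ and then takes $g$ to be the Radon--Nikodym derivative $d(f^*\circ\varphi^{-1})/d\mathbb{P}_Y$, which is exactly your $\nu$. The only cosmetic difference is that the paper factors through the intermediate measure $f^*$ on $\mathcal{F}_X$ before pushing forward, whereas you define the pushforward $\nu$ on $\mathcal{F}_Y$ in one step and are more explicit about verifying countable additivity.
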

\begin{proof}
Define a measure
$f^*$
on
$
(\Omega_X, \mathcal{F}_X)
$
as in the following diagram.
\begin{equation*}
\xymatrix@C=15 pt@R=20 pt{
&&
   A
       \ar @{|->}^{} [rr]
       \ar @{}_{ \mathrel{ \rotatebox[origin=c]{-90}{$\in$} } } @<+6pt> [d]
&&
   \textred{f^*}(A)
       \ar @{}^-{:=} @<-6pt> [r]
       \ar @{}_{ \mathrel{ \rotatebox[origin=c]{-90}{$\in$} } } @<+6pt> [d]
&
   \int_A f \, d \mathbb{P}_X
\\
   \mathcal{F}_Y
       \ar @{->}^{\varphi^{-1}} [rr]
       \ar @/_2pc/_{\mathbb{P}_Y} [rrrr]
&&
   \mathcal{F}_X
       \ar @{->}^{\textred{f^*}} @<+2pt> [rr]
       \ar @{->}_{\mathbb{P}_X} @<-2pt> [rr]
&&
   \mathbb{R}   
}
\end{equation*}
Then,
since
$
f^*
	\ll 
\mathbb{P}_X
$
and
$\varphi$
is null-preserving,
we have
\begin{equation*}
f^* \circ \varphi^{-1}
  \ll
\mathbb{P}_X \circ \varphi^{-1}
  \ll
\mathbb{P}_Y .
\end{equation*}
Therefore,
we get a following
Radon-Nikodym derivative.
\begin{equation*}
g
  :=
	d \,
  ( f^* \circ \varphi^{-1} )
/
	d \,
  \mathbb{P}_Y .
\end{equation*}
With this
$g$
we obtain for every
$B \in \mathcal{F}_Y$,
\begin{align*}
	&
\textblue{
 \int_B g \, d \mathbb{P}_Y
}
	=
\int_B \,
d (f^* \circ \varphi^{-1})
	=
(f^* \circ \varphi^{-1})(B)
 =
f^* ( \varphi^{-1}(B))
	=
\textblue{
 \int_{\varphi^{-1}(B)} f \, d \mathbb{P}_X 
} .
\end{align*}

\end{proof}

\begin{prop}{\normalfont{(\cite{AR_2019})}}
\label{prop:towerProp}
Let
$
X
	=
(\Omega_X, \mathcal{F}_X, \mathbb{P}_X)
$,
$
Y
	=
(\Omega_Y, \mathcal{F}_Y, \mathbb{P}_Y)
$
and
$
Z
	=
(\Omega_Z, \mathcal{F}_Z, \mathbb{P}_Z)
$
be three probability spaces,
and
$\varphi : X \to Y$
and
$\psi : Y \to Z$
be null-preserving maps.
Then, for
$
f \in L^{1}X
$, we have
\begin{equation}
\label{eq:towerProp}
E^{\psi}\big[
	E^{\varphi}[
		f
	]
\big]
		=
E^{\psi \circ \varphi}[
	f
]
	\quad
(\mathbb{P}_Z\mathrm{-a.s.}) .
\end{equation}

\end{prop}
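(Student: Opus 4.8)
The plan is to prove \eqref{eq:towerProp} by showing that the left-hand side satisfies the integral equation \eqref{eq:condExp} that characterizes $E^{\psi \circ \varphi}[f]$, and then invoking uniqueness. Note that the characterization is unique only up to $\mathbb{P}_Z$-a.s.\ equality: if $g_1, g_2 \in L^1 Z$ satisfy $\int_C g_1 \, d\mathbb{P}_Z = \int_C g_2 \, d\mathbb{P}_Z$ for every $C \in \mathcal{F}_Z$, then testing against $C = \{g_1 > g_2\}$ and $C = \{g_1 < g_2\}$ forces $g_1 = g_2$ $\mathbb{P}_Z$-a.s. This is exactly why the conclusion is stated as an almost-sure identity rather than a pointwise one.

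Before the main step I would check that the outer conditional expectation is defined, i.e.\ that $g := E^{\varphi}[f]$ lies in $L^1 Y$. Applying \eqref{eq:condExp} with $B = \{g \ge 0\}$ and with $B = \{g < 0\}$ and combining the two gives $\int_{\Omega_Y} \abs{g} \, d\mathbb{P}_Y \le \int_{\Omega_X} \abs{f} \, d\mathbb{P}_X < \infty$, so $g \in L^1 Y$ and $E^{\psi}[g]$ makes sense by Theorem \ref{thm:condExp}. The core computation is then two applications of \eqref{eq:condExp} together with the elementary identity $(\psi \circ \varphi)^{-1}(C) = \varphi^{-1}(\psi^{-1}(C))$: for an arbitrary $C \in \mathcal{F}_Z$,
\begin{align*}
\int_C E^{\psi}\big[ E^{\varphi}[f] \big] \, d\mathbb{P}_Z
  &= \int_{\psi^{-1}(C)} E^{\varphi}[f] \, d\mathbb{P}_Y \\
  &= \int_{\varphi^{-1}(\psi^{-1}(C))} f \, d\mathbb{P}_X
   = \int_{(\psi \circ \varphi)^{-1}(C)} f \, d\mathbb{P}_X ,
\end{align*}
where the first equality is the defining property of $E^{\psi}$ with test set $C$, and the second is the defining property of $E^{\varphi}$ applied to the test set $\psi^{-1}(C) \in \mathcal{F}_Y$. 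Since the resulting expression is precisely the right-hand side of the characterizing equation for $E^{\psi \circ \varphi}[f]$, the uniqueness noted above yields \eqref{eq:towerProp}.

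I do not expect a serious obstacle: structurally the proof is just a relabeling of test sets along $\psi^{-1}$, so that the middle space $Y$ is integrated out in one clean step. The only points that genuinely require attention are the integrability of $E^{\varphi}[f]$ (without which the left-hand side is not even defined) and the observation that the null-preserving hypothesis enters only through the existence half of Theorem \ref{thm:condExp}, applied to $\varphi$, $\psi$, and their composite $\psi \circ \varphi$; once all three conditional expectations exist, the tower identity is forced by the defining integral relations alone and needs no further appeal to absolute continuity.
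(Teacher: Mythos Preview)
Your proof is correct and follows essentially the same route as the paper: apply the defining identity \eqref{eq:condExp} twice to rewrite $\int_C E^{\psi}[E^{\varphi}[f]]\,d\mathbb{P}_Z$ as $\int_{(\psi\circ\varphi)^{-1}(C)} f\,d\mathbb{P}_X$, and conclude by uniqueness. The paper's version is terser, omitting your explicit checks of integrability of $E^{\varphi}[f]$ and of $\mathbb{P}_Z$-a.s.\ uniqueness, but the argument is the same.
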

\begin{proof}
All we need to show is that for every
$C \in \mathcal{F}_Z$,
$
\int_C 
E^{\psi}\big[
	E^{\varphi}[
		f
	]
\big]
\,
d \mathbb{P}_Z
		=
\int_C
E^{\psi \circ \varphi}[
	f
]
d \mathbb{P}_Z .
$
But, by iterating use of
(\ref{eq:condExp}),
we have
\[
\int_C 
E^{\psi}\big[
	E^{\varphi}[
		f
	]
\big]
\,
d \mathbb{P}_Z
		=
\int_{\psi^{-1}(C)}
	E^{\varphi}[f]
\,
d \mathbb{P}_Y
		=
\int_{\varphi^{-1}(\psi^{-1}(C))}
	f
\,
d \mathbb{P}_X
		=
\int_{(\psi \circ \varphi)^{-1}(C)}
	f
\,
d \mathbb{P}_X
		=
\int_C
E^{\psi \circ \varphi}[
	f
]
d \mathbb{P}_Z .
\]

\end{proof}

By Proposition \ref{prop:towerProp},
the following functor 
$\mathcal{E}$
is well-defined.

\begin{defn}{[Functor $\mathcal{E}$]}
\label{defn:functorE}
A functor
$
\mathcal{E}
	:
\Prob \to \Ban
$
is defined by
\begin{equation*}
\xymatrix @C=30 pt @R=10 pt {
	\Prob
		\ar @{->}^{
			\mathcal{E}
	} [rr]
&&
	\Ban
\\
	X
     \ar @{->}_{\varphi} [dd]
&&
	L^{1}X
		\ar @{}^{\ \ \ \ni} @<-6pt> [r]
		\ar @{->}^{
			\mathcal{E}(\varphi)
		} [dd]
&
	f
		\ar @{|->} [dd]
\\\\
	Y
&&
	L^{1}Y
		\ar @{}^{\ \ \ \ni} @<-6pt> [r]
&
	E^{\varphi}[f]
}
\end{equation*}
\end{defn}

We call
$\mathcal{E}$
the \newword{conditional expectation functor}.

\subsection{$\mathcal{T}$-filtration}
\label{sec:filtration}

Let
$\mathcal{T}$
be a small category which is considered as a \newword{time domain}
throughout this section.

\begin{defn}{[Filtration]}
\label{defn:filtration}
A $\mathcal{T}$-\newword{filtration} is a contravariant functor
$
F : 
\mathcal{T}^{op}
	\to
\Prob
$.
\end{defn}

\begin{figure*}[t]
\[
\xymatrix@C=30 pt@R=30 pt{
   \mathcal{T}^{op}
      \ar @{->}^{\textred{F}} [rrrr]
&&&&
   \Prob
\\
   s
      \ar @{->}_{*_{s,t}} [d]
      \ar @(dl,ul)[]^{1_s}
      \ar @/^2pc/^{
         *_{t,u} \circ *_{s,t}
      } [dd]
&&&&
	F(s)
		\ar @(dl,ul)[]^{
			\textblue{
			F(1_s) = 1_{F(s)}
			}
		}
		\ar @{}^-{:=} @<-6pt> [r]
&
	\textred{X_s}
\\
   t
      \ar @{->}_{*_{t,u}} [d]
      \ar @(dl,ul)[]^{1_t}
&&&&
   F(t)
      \ar @{->}^{\textred{\varphi_{t,s}} =: F(*_{s,t})} [u]
      \ar @(dl,ul)[]^{
        \textblue{
         F(1_t) = 1_{F(t)}
        }
      }
\\
   u
      \ar @(dl,ul)[]^{1_u}
&&&&
	F(u)
		\ar @{->}^{\textred{\varphi_{u,t}} =: F(*_{t,u})} [u]
		\ar @(dl,ul)[]^{
			\textblue{
			F(1_u) = 1_{F(u)}
			}
		}
		\ar @{}^-{:=} @<-6pt> [r]
		\ar @/_2pc/_{
			\textblue{
			F(*_{t,u} \circ *_{s,t}) = F(*_{s,t}) \circ F(*_{t,u}) = \textred{\varphi_{u,s}}
			}
		} [uu]
&
	\textred{X_u}
}
\]
\caption{$\mathcal{T}$-Filtration as a contravariant functor from $\mathcal{T}$ to $\Prob$}
\label{fig:filfun}
\end{figure*}

\begin{exmp}
\label{exmp:filteredProbSp}
Let
$
(\Omega, \mathcal{F}, \{ \mathcal{F}_t \}_{t \in \mathbb{R}_+}, \mathbb{P})
$
be a filtered probability space.
Then, we can define 
a filtration
$
F : \mathcal{T}^{op} \to \Prob
$
by
\begin{enumerate}
\item
$
\mathcal{T}
	:=
i_{Ord}(\mathbb{R}_+, \le)
$,

\item
For
$t \in \mathbb{R}_+$,
$
Ft
	:=
(\Omega, \mathcal{F}_t, \mathbb{P}|_{\mathcal{F}_t})
$,

\item
For $s, t \in \mathbb{R}_+$
with $s \le t$, define
$
F(*_{s,t}) :=  1_{\Omega} 
: Ft \to Fs .
$

\end{enumerate}
See Figure \ref{fig:tfilCla}.

\end{exmp}

\begin{figure*}[t]
\begin{equation*}
\xymatrix@C=20 pt@R=8 pt{
	\textblue{\Ord}
      \ar @{->}^{\textred{i_{\Ord}}} [rr]
&&
	\textblue{\Cat}
\\
	\mathbb{R}_+
      \ar @{|->}^{i_{\Ord}} [rr]
&&
   \textred{\mathcal{T}}^{\textblue{op}}
      \ar @{->}^{\textred{F}} [rrrr]
&&&&
   \Prob
\\
	s
		\ar @{}_{ \mathrel{ \rotatebox[origin=c]{90}{$\textblue{\ge}$} } } @<+6pt> [dd]
&&
   s
      \ar @(dr,ur)[]_{1_s}
      \ar @{->}^{\textred{*_{s,t}}} [dd]
      \ar @/_2pc/_{
         *_{s,u}
      } [dddd]
&&&&
	\textred{F(s)}
		\ar @(dl,ul)[]^{
			F(1_s) = 1_{F(s)}
		}
		\ar @{}^-{:=} @<-6pt> [r]
&
	\textblue{
		(\Omega, \mathcal{F}_s, \mathbb{P}|_{\mathcal{F}_s})
	}
&
	\mathcal{F}_s
      \ar @{->}^{\textblue{1_{\Omega}^{-1}}} [dd]
\\\\
	t
		\ar @{}_{ \mathrel{ \rotatebox[origin=c]{90}{$\textblue{\ge}$} } } @<+6pt> [dd]
&&
   t
      \ar @(dr,ur)[]_{1_t}
      \ar @{->}^{*_{t,u}} [dd]
&&&&
	\textred{F(t)}
      \ar @{->}^{\textblue{1_{\Omega}} =: \textred{F(*_{s,t})}} [uu]
      \ar @(dl,ul)[]^{
         F(1_t) = 1_{F(t)}
		}
		\ar @{}^-{:=} @<-6pt> [r]
&
	\textblue{
		(\Omega, \mathcal{F}_t, \mathbb{P}|_{\mathcal{F}_t})
	}
&
	\mathcal{F}_t
\\\\
	u
&&
	u
		\ar @(dr,ur)[]_{\textred{1_u} = *_{u,u}}
}
\end{equation*}
\caption{$\mathcal{T}$-filtration corresponding to a classical filtration}
\label{fig:tfilCla}
\end{figure*}

Example \ref{exmp:filteredProbSp}
shows that our filtration is a generalization of the classical filtration.

\section{Synthetic filtration}
\label{sec:syncFilt}

The central object of this paper is the notion of a \emph{synthetic filtration}, 
which generalizes classical filtrations by incorporating contextual information into the flow of time. 
In the standard setting, 
a filtration is a contravariant functor 
$F : {\mathcal{T}}^{op} \to \Prob$, 
where ${\mathcal{T}}$ is a linearly ordered index set such as $\mathbb{R}_+$ or $\mathbb{N}$. 
This structure presupposes that time unfolds along a single path, 
so that the present is determined by a unique past.

In contrast, 
our construction replaces the linear index category with a richer category $\bSigma$, 
in which each moment of time is equipped with a \emph{context}. 
Objects of $\bSigma$ are pairs $[t]_c$ consisting of a discrete time 
$t \in \mathbb{N}$ 
and a context 
$c \in \Sigma_0$, 
while morphisms are inherited from the simplex category $\bDelta$ 
subject to compatibility with the chosen context. 
Intuitively, $\bSigma$ allows us to encode the idea 
that the present state may be reached through multiple possible pasts, 
each corresponding to a different contextual path.

A \emph{$\bSigma$-filtration} is then defined as a contravariant functor
$
X : \bSigma^{op} \to \Prob
$,
which associates to each time-in-context $[t]_c$ a probability space and 
to each morphism the corresponding null-preserving map. 
This categorical formulation provides a geometric model 
in which uncertainty arises not only from stochastic variation but also from the multiplicity of contexts.

\subsection{Simplex category}
\label{simpCat}
\nocite{JT_2008}

In this and the next subsections, 
we review the simplex category,
which will be used as a fundamental structure when constructing 
category
$\bSigma$.
Please refer to textbooks such as
\cite{richter2020}
for the further details of the topic.

\begin{defn}{[Simplex category]}
\label{defn:simpCat}
\begin{enumerate}
\item
For $n \in \mathbb{N}$,
$
[n]
	:=
\{0, 1, \cdots, n\}
$
is a finite totally ordered set with the usual comparison of integers.

\item
$\bDelta$ is the full subcategory of $\Ord$ such that
\[
	\mathcal{O}_{\bDelta} := \{[n] | n \in \mathbb{N}\} .
\]
We call $\bDelta$ the \newword{simplex category}.

\item
For
$n \ge 1$
and
$i \in [n]$,
let
$
\delta^n_i
	:
[n-1] \to [n]
$
be the order-preserving inclusion
that misses the value $i$ in the target,
that is,
for
$k \in [n-1]$,
\begin{equation}
\label{eq:deltaNI}
\delta^n_i(k)
	:=
\begin{cases}
	k
		&\textrm{if }
	0 \le k \le i-1,
\\
	k+1
		&\textrm{if }
	i \le k \le n-1.
\end{cases}
\end{equation}

\item
For
$n \ge 0$
and
$j \in [n]$,
let
$
\sigma^n_j
	:
[n+1] \to [n]
$
be the order-preserving surjective map
that sends $j$ and $j+1$ to $j$,
that is,
for
$k \in [n+1]$,
\begin{equation}
\label{eq:sigmaNJ}
\sigma^n_j(k)
	:=
\begin{cases}
	k
		&\textrm{if }
	0 \le k \le j,
\\
	k-1
		&\textrm{if }
	j < k \le n+1.
\end{cases}
\end{equation}

\end{enumerate}
\end{defn}

\begin{exmp}
\label{exmp:exmpFaceDegen}
Let
$f : [3] \to [2]$
be an order-preserving map like the following:
\[
\xymatrix@C=60 pt@R=20 pt{
	[3]
		\ar @{->} [r]^{f}
&
	[2]
\\
	3
		\ar @{->} [rd]
&
\\
	2
		\ar @{->} [r]
&
	2
\\
	1
		\ar @{->} [rd]
&
	1
\\
	0
		\ar @{->} [r]
&
	0
}
\]
Then, $f$ can be represented like the following:
\[
\xymatrix@C=50 pt@R=20 pt{
	[3]
		\ar @{->} [r]^{\sigma^2_2}
&
	[2]
		\ar @{->} [r]^{\sigma^1_0}
&
	[1]
		\ar @{->} [r]^{\delta^2_1}
&
	[2]
\\
	3
		\ar @{->} [rd]
&
&
&
\\
	2
		\ar @{->} [r]
&
	2
		\ar @{->} [rd]
&
&
	2
\\
	1
		\ar @{->} [r]
&
	1
		\ar @{->} [rd]
&
	1
		\ar @{->} [ru]
&
	1
\\
	0
		\ar @{->} [r]
&
	0
		\ar @{->} [r]
&
	0
		\ar @{->} [r]
&
	0
}
\]
\end{exmp}

The next two propositions are well-known.

\begin{prop}
\label{prop:simpCatDeltaSigma}
Every arrow in the simplex category
$\bDelta$
can be represented as a composition of 
$\delta$s and $\sigma$s.
\end{prop}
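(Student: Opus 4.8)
The plan is to establish the decomposition via the standard epi–mono factorization available in $\bDelta$. First I would observe that every order-preserving map $f : [m] \to [n]$ factors as $f = g \circ e$, where $e : [m] \to [p]$ is an order-preserving surjection onto the image (so that $p + 1 = \abs{\im f}$) and $g : [p] \to [n]$ is the order-preserving injection realizing the inclusion of the image into $[n]$. Such a factorization exists because the image of an order-preserving map between finite totally ordered sets is again a finite totally ordered set, and both the corestriction onto the image and the inclusion are monotone. Given this, it suffices to treat the injective and surjective parts separately and then compose.

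For the injective part, I would show that every order-preserving injection $g : [p] \to [n]$ is a composite of face maps $\delta$, proceeding by induction on the defect $n - p$. When $n = p$ the map is the identity, which is the empty composite. Otherwise $g$ misses at least one value; taking $i$ to be the largest element of $[n]$ not in the image, one sets $g'(\ell) := g(\ell)$ when $g(\ell) < i$ and $g'(\ell) := g(\ell) - 1$ when $g(\ell) > i$, obtaining an order-preserving injection $g' : [p] \to [n-1]$ with $g = \delta^n_i \circ g'$ and strictly smaller defect. The induction then closes.

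For the surjective part, I would dually show that every order-preserving surjection $e : [m] \to [p]$ is a composite of degeneracy maps $\sigma$, by induction on the excess $m - p$. When $m = p$ the map is the identity. Otherwise $e$ is surjective but not injective, so some $j$ satisfies $e(j) = e(j+1)$; defining $e'(\ell) := e(\ell)$ for $\ell \le j$ and $e'(\ell) := e(\ell + 1)$ for $\ell > j$ yields an order-preserving surjection $e' : [m-1] \to [p]$ with $e = e' \circ \sigma^{m-1}_j$ and smaller excess, completing the induction. Combining the three steps expresses $f = g \circ e$ as a composite of $\delta$s and $\sigma$s.

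The main obstacle I anticipate is not conceptual but the bookkeeping in the two inductive steps: one must verify at each stage that the residual maps $g'$ and $e'$ are genuinely order-preserving with the claimed domains and codomains, and that the chosen index $i$ or $j$ makes the composite reproduce $f$ exactly (in particular that monotonicity is preserved at the junction where the case split occurs). The example computed earlier, which factors $f : [3] \to [2]$ as $\delta^2_1 \circ \sigma^1_0 \circ \sigma^2_2$, already exhibits precisely the shape of the resulting decomposition: two degeneracies collapsing the fibers, followed by one face map realizing the inclusion of the image.
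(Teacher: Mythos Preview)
Your argument is correct and is the standard epi--mono factorization proof found in the references the paper cites (e.g.\ Mac~Lane or Richter). Note, however, that the paper does not actually supply a proof of this proposition: it is introduced with ``The next proposition is well-known'' and left unproved, so there is no approach in the paper to compare against. Your write-up would serve perfectly well as the omitted proof; the only cosmetic point is that the final sentence slightly misreads the worked example---there $f$ itself is surjective, so the factorization $\delta^2_1 \circ \sigma^1_0 \circ \sigma^2_2$ has image $[1]$ inside $[2]$, meaning the injective part is a single $\delta$ while the surjective part contributes two $\sigma$s, exactly as your inductive scheme predicts for a map with one missing value and two nontrivial fibers.
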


\begin{prop}
\label{prop:faceDegenEqs}
We have the following equations.
\begin{align}
\label{eq:deltaDelta}
\delta^n_j
	\circ
\delta^{n-1}_i
		&=
\delta^n_i
	\circ
\delta^{n-1}_{j-1}
		\quad
(0 \le i < j \le n),
		\\
\label{eq:sigmaSigma}
\sigma^{n-1}_j
	\circ
\sigma^{n}_i
		&=
\sigma^{n-1}_i
	\circ
\sigma^{n}_{j+1}
		\quad
(0 \le i \le j \le n-1),
		\\
		\nonumber
\textrm{For }
n \ge 1,
i \in [n+1],
\textrm{ and }
j \in [n],
		\\
\label{eq:sigmaDelta}
\sigma^n_j
	\circ
\delta^{n+1}_i
		&=
\begin{cases}
\delta^{n}_i
	\circ
\sigma^{n-1}_{j-1}
		&\quad
(i < j),
			\\
1_{[n]}
		&\quad
(i = j \textrm{ or } i = j+1),
			\\
\delta^{n}_{i-1}
	\circ
\sigma^{n-1}_{j}
		&\quad
(i > j+1) .
\end{cases}
\end{align}

\end{prop}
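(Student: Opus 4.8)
The plan is to regard each of the three relations as an equality between two order-preserving maps with common domain and codomain of the form $[m]$. Because such a map is determined by its values on the elements $0, 1, \dots, m$, it suffices to fix an arbitrary $k$ in the domain, substitute the explicit definitions \eqref{eq:deltaNI} and \eqref{eq:sigmaNJ}, and check that both composites return the same value. Thus each identity reduces to a finite case analysis, organized according to the position of $k$ relative to the indices appearing in the two faces or degeneracies.

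For the face relation \eqref{eq:deltaDelta}, both composites have domain $[n-2]$. On the left I would first apply $\delta^{n-1}_i$, which fixes $k$ when $k \le i-1$ and sends $k \mapsto k+1$ when $k \ge i$, and then apply $\delta^n_j$; using the hypothesis $i < j$ to locate the intermediate value relative to $j$, the composite resolves into the three-piece function sending $k$ to $k$, to $k+1$, or to $k+2$ according as $k \le i-1$, $i \le k \le j-2$, or $k \ge j-1$. Running the same computation on the right, with $\delta^{n-1}_{j-1}$ followed by $\delta^n_i$, yields exactly the same three-piece function; the condition $i < j$ is precisely what keeps the breakpoints $i-1$ and $j-1$ in the correct order so that the cases neither overlap nor leave a gap. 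The degeneracy relation \eqref{eq:sigmaSigma} is handled symmetrically: both composites have domain $[n+1]$, and the hypothesis $i \le j$ guarantees that the two collapsing steps on each side identify the same pairs of elements, so a pointwise comparison across the regions cut out by $i$ and $j+1$ gives agreement.

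The mixed relation \eqref{eq:sigmaDelta} is where the bookkeeping concentrates, and I expect it to be the main obstacle. Here the common domain is $[n]$, and the right-hand side itself branches into three forms, so I would first separate the three regimes $i < j$, $i \in \{j, j+1\}$, and $i > j+1$, and within each regime carry out the pointwise check. In the two middle cases $i = j$ and $i = j+1$ the inner inclusion $\delta^{n+1}_i$ lands exactly on one of the two values that $\sigma^n_j$ glues together, which is what produces the identity map; in the outer regimes the composite again becomes a single face followed by a single degeneracy, but with the index of each shifted by one, and the care needed is to track those shifts ($j \mapsto j-1$ with $i$ unchanged when $i<j$, versus $i \mapsto i-1$ with $j$ unchanged when $i > j+1$) consistently across all subranges of $k$. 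Since these relations are the defining combinatorial identities of $\bDelta$, there is no shortcut beyond this direct verification, but each case is an elementary integer comparison.
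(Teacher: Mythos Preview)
The paper states this proposition without proof; these are the standard cosimplicial identities in $\bDelta$, and the reader is referred to textbooks such as \cite{richter2020} at the start of the subsection. Your plan---pointwise verification using the explicit formulas \eqref{eq:deltaNI} and \eqref{eq:sigmaNJ}, organized by a case analysis on the position of $k$ relative to the indices---is correct and is exactly the routine argument one finds in any standard treatment, so there is nothing to compare against and nothing missing from your outline.
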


\begin{defn}{[Simplicial objects]}
\label{defn:simplicialObj}
Let
$\mathcal{C}$
be a category.
A \newword{simplicial object}
in 
$\mathcal{C}$
is a contravariant functor
$
\bDelta^{op}
	\to
\mathcal{C}
$.
\end{defn}

\begin{Note}
\label{Note:simpObj}
Let
$
X
	:
\bDelta^{op} \to \mathcal{C}
$
be a simplicial object.
By Proposition \ref{prop:simpCatDeltaSigma},
$X$ is completely determined by specifying objects
\begin{equation}
\label{eq:Xn}
X_n := X[n]
\end{equation}
of $\mathcal{C}$ for every $n \in \mathbb{N}$
and
two sets of maps
\begin{equation}
\label{eq:faceMaps}
d^n_i := X(\delta^n_i)
	:
X_n \to X_{n-1}
	\quad
(n \ge 1 \; \textrm{and} \; 0 \le i \le n)
\end{equation}
called
\newword{face maps}
and
\begin{equation}
\label{eq:degeMaps}
s^n_j := X(\sigma^n_j)
	:
X_n \to X_{n+1}
	\quad
(n \ge 0 \; \textrm{and} \; 0 \le j \le n)
\end{equation}
called
\newword{degeneracy maps},
which satisfy the dual of 
equations in
Proposition \ref{prop:faceDegenEqs},
namely,
\begin{align}
\label{eq:SOdeltaDelta}
d^{n-1}_i
	\circ
d^n_j
		&=
d^{n-1}_{j-1}
	\circ
d^n_i
		\quad
(0 \le i < j \le n),
		\\
\label{eq:SOsigmaSigma}
s^{n}_i
	\circ
s^{n-1}_j
		&=
s^{n}_{j+1}
	\circ
s^{n-1}_i
		\quad
(0 \le i \le j \le n-1),
		\\
		\nonumber
\textrm{For }
n \ge 1,
i \in [n+1]
\textrm{ and }
j \in [n],
		\\
\label{eq:SOsigmaDelta}
d^{n+1}_i
	\circ
s^n_j
		&=
\begin{cases}
s^{n-1}_{j-1}
	\circ
d^{n}_i
		&\quad
(i < j),
			\\
1_{X_n}
		&\quad
(i = j \textrm{ or } i = j+1),
			\\
s^{n-1}_{j}
	\circ
d^{n}_{i-1}
		&\quad
(i > j+1) .
\end{cases}
\end{align}

\end{Note}

\subsection{Geometric realization}
\label{sec:geomRealization}

The construction of geometric realization is essential for our framework 
because it provides a concrete topological space on which probability measures can be defined. 
While simplicial objects capture combinatorial data about time and context, 
their geometric realizations translate this information into shapes (simplices) 
that admit natural probability measures, 
such as the uniform distribution or, more generally, Dirichlet distributions. 
In this way, 
geometric realization acts as a bridge between the categorical structure of time (encoded in $\bDelta$ or $\bSigma$) 
and the analytic structure of probability theory. 
Without this step, filtrations would remain purely combinatorial, 
whereas our goal is to endow them with probabilistic and geometric content suitable for modeling uncertainty.

\subsubsection{Functor $\Delta$}

\begin{defn}{[Geometric realization]}
\label{defn:geomReal}
A functor
$
\Delta :
	\bDelta \to \Top
$
is defined by
	\begin{itemize}
	\item
For 
$n \in \mathbb{N}$,
$\Delta_n$
is a topological space 
defined as
the convex closure of the standard basis
$\mathbf{e}^n_0, \mathbf{e}^n_1, \cdots, \mathbf{e}^n_n$ 
of
$\mathbb{R}^{n+1}$
with the Euclidean topology.
Thus,
\begin{equation}
\label{eq:stdTopNsymplex}
\Delta_n
	=
\big\{
	\sum_{i=0}^n w_i \mathbf{e}^n_i
\mid
	0 \le w_i \le 1, \,
	\sum_{i=0}^n w_i = 1
\big\} .
\end{equation}
Set
$
\Delta[n] := \Delta_n.
$

	\item
For
$
f : [n] \to [m]
$
in $\bDelta$,
the map
$
\Delta f 
	: 
\Delta[n] \to \Delta[m]
$
is defined by
\begin{equation}
\label{eq:deltaFun}
\Delta f
\Big(
\sum_{i=0}^n
	w_i \mathbf{e}^n_i
\Big)
	:=
\sum_{i=0}^n
	w_i \mathbf{e}^m_{f(i)} .
\end{equation}

	\end{itemize}

\begin{figure}[]
\begin{center}
\begin{tikzpicture}
    
\coordinate (O) at (0,0) node at (O) [color=black, below] {$\Delta_0$};
\fill (O) circle (2pt);
    
\coordinate (A) at (2,0);
\coordinate (B) at (5,0);
\draw (A) -- (B);
\fill (A) circle (2pt);
\fill (B) circle (2pt);
\coordinate (AB) at (3.5,0) node at (AB) [below, color=black] {$\Delta_1$};

\coordinate (C) at (7,0);
\coordinate (D) at (10,0);
\coordinate (E) at (8.5,2.5);
\fill (C) circle (2pt);
\fill (D) circle (2pt);
\fill (E) circle (2pt);
\draw (C) -- (D) -- (E) --cycle;
\coordinate (CD) at (8.5,0) node at (CD) [below, color=black] {$\Delta_2$};

\end{tikzpicture}
\end{center}
\label{fig:stdNsymplex}
\caption{Standard $n$-simplexes}
\end{figure}

\end{defn}

\subsubsection{Geometric realizers of $\bDelta$}
\label{sec:GeoReal}

In this subsection,
we will consider a contravariant functor
$
\mathcal{R}
	:
\bDelta^{op} \to \Top
$
instead of the covariant functor
$
\Delta
	:
\bDelta \to \Top
$,
while keeping their object maps are same, that is,
$
\mathcal{R}([n]) := \Delta([n]) = \Delta_n.
$
We call $\mathcal{R}$ a geometric realizer.
Actually, a geometric realizer is a simplicial space.

By Note \ref{Note:simpObj},
all we need to define 
for determining the arrow part of the functor $\mathcal{R}$
is to
specify 
face and degeneracy maps:
$ d^{n}_i : \Delta_n \to \Delta_{n-1} $
and
$ s^{n}_i : \Delta_n \to \Delta_{n+1} $.
We provide a
geometric realizer
$\mathcal{R}_0$.

\begin{defn}{[Face and degeneracy maps of $\mathcal{R}_0$]}
\label{defn:faceDegeMapsForRzero}
For
$i, j \in [n]$,
we define two functions
$ d^{n}_i : \Delta_n \to \Delta_{n-1} $
and
$ s^{n}_i : \Delta_n \to \Delta_{n+1} $
by
for
$
\mathbf{w}
	=
\sum_{k=0}^n
	w_k
	\mathbf{e}^n_k
	\in
\Delta_n
$,
\begin{align}
\label{eq:XdZero}
d^{n}_i(\mathbf{w})
	&:=
\begin{cases}
	\sum_{k=0}^{i-2}
		w_k \mathbf{e}^{n-1}_k
			+
	(w_{i-1} + w_i) \mathbf{e}^{n-1}_{i-1}
			+
	\sum_{k=i}^{n-1}
		w_{k+1} \mathbf{e}^{n-1}_k 
	& (\textrm{if } i > 0), 
				\\
	\sum_{k=0}^{n-2}
		w_{k+1} \mathbf{e}^{n-1}_k
			+
	(w_{n} + w_0) \mathbf{e}^{n-1}_{n-1}
	& (\textrm{if } i = 0), 
\end{cases}
			\\
\label{eq:XsZero}
s^{n}_j(\mathbf{w})
	&:=
	\sum_{k=0}^{j-1}
		w_{k} \mathbf{e}^{n+1}_k
			+
	\sum_{k=j+1}^{n+1}
		w_{k-1} \mathbf{e}^{n+1}_k   .
\end{align}

\end{defn}

The next proposition says that
$d^n_i$
and
$s^n_j$
defined in
Definition \ref{defn:faceDegeMapsForRzero},
satisfy
the conditions stated in 
Note \ref{Note:simpObj}.

\begin{prop}
\label{prop:faceDegeMapsForRzero}
Functions
$d^n_i$
and
$s^n_j$
satisfy the followings:
\begin{align}
\label{eq:XSOdeltaDeltaZero}
d^{n-1}_i
	\circ
d^n_j
		&=
d^{n-1}_{j-1}
	\circ
d^n_i 
		\quad
(0 \le i < j \le n),
		\\
\label{eq:XSOsigmaSigmaZero}
s^n_i
	\circ
s^{n-1}_j
		&=
s^n_{j+1}
	\circ
s^{n-1}_i
		\quad
(0 \le i < j \le n-1),
		\\
\nonumber
		\textrm{For }
n \ge 1,
	\,
i \in [n+1]
\textrm{ and }
j \in [n],
		\\
\label{eq:XSOsigmaDeltaZero}
d^{n+1}_i
	\circ
s^n_j
		&=
\begin{cases}
s^{n-1}_{j-1}
	\circ
d^{n}_i
		&\quad
(i < j),
			\\
1_{X_n}
		&\quad
(i = j \textrm{ or } i = j+1),
			\\
s^{n-1}_{j}
	\circ
d^{n}_{i-1}
		&\quad
(i > j+1) .
\end{cases}
\end{align}
\end{prop}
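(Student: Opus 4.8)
The plan is to verify each identity by a direct computation. Every one of $d^n_i$ and $s^n_j$ is an affine, weight-preserving map between simplices, completely determined by how it redistributes the weight vector $(w_0,\dots,w_n)$ into the target weight vector. Since both sides of each identity in (\ref{eq:XSOdeltaDeltaZero})--(\ref{eq:XSOsigmaDeltaZero}) are again such maps, it suffices to apply both sides to a general point $\mathbf{w}=\sum_{k}w_k\mathbf{e}^n_k$ and check that the coefficient of each target basis vector $\mathbf{e}^m_\ell$ agrees. Concretely, I would record for each map which input weights are summed into which output coordinate, and then compare these bookkeeping rules on the two sides. Continuity is automatic, so the only content is this combinatorial matching of coefficients.

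I would dispose of the degeneracy--degeneracy identity (\ref{eq:XSOsigmaSigmaZero}) first, as it is the routine case. The map $s^n_j$ merely inserts a zero coordinate at position $j$ and shifts the higher coordinates up by one, with no case split. Writing out $s^n_i\circ s^{n-1}_j$ and $s^n_{j+1}\circ s^{n-1}_i$ on $\mathbf{w}$ for $0\le i<j\le n-1$, both insert zeros at the same two positions and leave the remaining weights in the same order, so the coefficients agree term by term.

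Next I would treat the face--degeneracy identity (\ref{eq:XSOsigmaDeltaZero}) by the three stated ranges. In the middle range $i\in\{j,j+1\}$ the zero inserted by $s^n_j$ at position $j$ is exactly one of the two coordinates merged by the following face map, so the net effect is to add zero to an existing weight and then relabel back, giving $1_{X_n}$. In the outer ranges $i<j$ and $i>j+1$ the insertion and the merge act on disjoint blocks of indices and commute after the expected reindexing. Here I must already be careful with the $i=0$ branch of $d^{n+1}_i$, whose cyclic form merges the first and last coordinates; I would check that in this branch the inserted zero never coincides with either merged endpoint in a way that breaks the stated formula.

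The face--face identity (\ref{eq:XSOdeltaDeltaZero}) is the crux, because $d^n_0$ is not the usual ``drop a vertex'' map but the cyclic merge sending $w_0+w_n$ to the last coordinate. For $i\ge 1$ the verification is the classical bookkeeping: the two compositions merge the adjacent pairs $\{i-1,i\}$ and $\{j-1,j\}$ in opposite orders, and a coordinate-by-coordinate comparison confirms equality. The hard part will be the case $i=0$, where one side performs the wrap-around merge first and the other performs it after $d^n_j$ has already relabeled the indices. I would verify that both orders produce the same output weight vector, in which one coordinate carries a three-way sum $w_0+w_{j-1}+w_j$ (or $w_0+w_1+w_n$ in the boundary subcase $j=1$, where the cyclic pair $\{0,n\}$ and the adjacent pair $\{0,1\}$ overlap) and the remaining coordinates carry the untouched weights in the correct order. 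Ensuring this three-way merge and the wrap-around position match in every index subcase---$j=1$, general $1<j<n$, and $j=n$---is where the genuine care is needed; the remaining subcases reduce to the standard computation.
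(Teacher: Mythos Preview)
Your plan is sound: a direct coefficient-by-coefficient comparison on a generic $\mathbf{w}$ is precisely how one verifies these identities, and you have correctly identified the only nontrivial wrinkle, namely the cyclic form of $d^n_0$. The case analysis you outline for (\ref{eq:XSOdeltaDeltaZero}) with $i=0$ does work out---both sides produce the weight vector $(w_1,\dots,w_{j-2},w_{j-1}+w_j,w_{j+1},\dots,w_{n-1},w_n+w_0)$ in the generic subcase, with the expected collapses at $j=1$ and $j=n$---and the $i=0$ branch of (\ref{eq:XSOsigmaDeltaZero}) is equally benign once $j\ge 1$, since the inserted zero never lands at position $0$ or $n+1$ and hence never interferes with the wrap-around merge.

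There is nothing in the paper to compare against: the proposition is stated without proof and treated as a routine verification. Your proposal therefore supplies exactly the argument the paper omits, and there is no alternative approach on offer to contrast with.
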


We call
$
\mathcal{R}_0 : \bDelta^{op} \to \Top
$
the \newword{standard geometric realizer}.

\subsection{$\bSigma$-filtration}
\label{sec:SigmaFilt}

This section explains the core innovation of this paper.
We introduce a category
$\bSigma$
which will be used as a time domain for our synthetic model.

\begin{defn}{[Time in a context]}
\label{defn:timeInContextu}
Let 
$
t 
	\in
\mathbb{N}
	=
\{0, 1, 2, \cdots\}
$.
\begin{enumerate}
\item
\[
\Sigma_0
	:=
\prod_{t \in \mathbb{N}}
	[t]
\]
We call a member
$
c \in \Sigma_0
$
a \newword{context}.

\item
A map
\[
p_t
	:
\Sigma_0 \to [t]
\]
is the projection.
For
$c \in \Sigma_0$
we write
$
c_t
$
for
$
p_t(c)
$.

\item
A binary relation 
$\sim_t$
on
$
\Sigma_0
$
is defined by
for
$c, d \in \Sigma_0$,
\[
c
	\sim_t
d
	\; \Leftrightarrow	\;
\forall
s \ge t
	\,.\,
c_s = d_s .
\]

\item
For
$c \in \Sigma_0$,
\[
\twc{t}{c}
	:=
(t, [c]_{\sim_{t+1}}),
\]
which is called a \newword{time $t$ in the context $c$}.

\item
\[
\Sigma
	:=
\{
	\twc{t}{c}
\mid
	t \in \mathbb{N},
	c \in \Sigma_0
\}.
\]
\end{enumerate}
\end{defn}

\begin{prop}
\label{prop:timeInContext}
Let
$
t
	\in
\mathbb{N}
$
and
$c, d \in \Sigma_0$.
\begin{enumerate}
\item
$
\twc{0}{c}
	=
(0, \Sigma_0)
$
	\quad
(does not depend on $c$),

\item
$
\twc{t}{c}
	=
\twc{t}{d}
$
if and only if
$
c \sim_{t+1} d .
$
\end{enumerate}
\end{prop}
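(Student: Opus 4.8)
The plan is to treat the two assertions separately, since part~(1) is a degenerate initial case while part~(2) is an instance of the general quotient principle for equivalence relations. For part~(1) I would unfold the definition $\twc{0}{c} = (0, [c]_{\sim_1})$ and show that the class $[c]_{\sim_1}$ is all of $\Sigma_0$, which in particular makes it independent of $c$. The condition $c \sim_1 d$ constrains only the coordinate indexed by $0$, and there the factor $[0] = \{0\}$ is a singleton, so $c_0 = d_0 = 0$ holds automatically for every $d \in \Sigma_0$. Hence $c \sim_1 d$ for all $d$, giving $[c]_{\sim_1} = \Sigma_0$ and therefore $\twc{0}{c} = (0, \Sigma_0)$ independently of $c$.

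For part~(2) I would first note that $\twc{t}{c} = (t, [c]_{\sim_{t+1}})$ and $\twc{t}{d} = (t, [d]_{\sim_{t+1}})$ are equal as ordered pairs exactly when their first components agree (both equal $t$, which is automatic) and their second components coincide as subsets of $\Sigma_0$, that is $[c]_{\sim_{t+1}} = [d]_{\sim_{t+1}}$. The remaining step is the standard fact that, for an equivalence relation, two classes coincide precisely when their representatives are related, yielding $[c]_{\sim_{t+1}} = [d]_{\sim_{t+1}} \Leftrightarrow c \sim_{t+1} d$. To license this step I would first verify that each $\sim_{t+1}$ is reflexive, symmetric and transitive; this is immediate, as the relation is defined by equality of prescribed coordinates and coordinatewise equality inherits these properties from equality on each factor $[s]$.

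The argument is elementary, so I do not expect a deep obstacle; the one point that genuinely requires care is the initial case in part~(1), where one must check that the constraint imposed by $\sim_1$ is vacuous because the coordinate it compares lies in the trivial factor $[0] = \{0\}$. Once this degeneracy is correctly identified, and once each $\sim_{t+1}$ is confirmed to be an equivalence relation for part~(2), both claims follow directly from Definition~\ref{defn:timeInContextu}.
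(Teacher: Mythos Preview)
The paper states this proposition without supplying a proof, so there is nothing to compare against directly; your handling of part~(2) is correct and is exactly the routine verification one would expect (equality of ordered pairs, then the standard fact that two equivalence classes coincide iff their representatives are related, once $\sim_{t+1}$ is checked to be an equivalence relation).

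Your argument for part~(1), however, contains a genuine error: you have reversed the direction of the constraint in Definition~\ref{defn:timeInContextu}(3). The relation $\sim_1$ is defined by
\[
c \sim_1 d \;\Longleftrightarrow\; \forall s \ge 1 \,.\; c_s = d_s,
\]
so it forces agreement on \emph{every} coordinate with index at least $1$ and leaves only the $0$-th coordinate unconstrained --- not the other way around. With the definition as written, the class $[c]_{\sim_1}$ consists of those $d$ with $d_s = c_s$ for all $s \ge 1$; since $d_0 \in [0] = \{0\}$ is already forced, this yields $[c]_{\sim_1} = \{c\}$, a singleton, and hence $\twc{0}{c} = (0,\{c\})$ \emph{does} depend on $c$. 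In other words, under the stated definitions part~(1) is not derivable; either the inequality in Definition~\ref{defn:timeInContextu}(3) or the statement of part~(1) appears to contain a slip. Your argument would go through verbatim if $\sim_t$ were instead defined via $\forall s < t$, which may well be the intended convention, but that is not what the paper writes.
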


The following concept will be used for encoding elements of 
$\Sigma_0$.

\begin{defn}{[Cantor expansion]}
\label{defn:cantorExp}
The
\newword{Cantor expansion}
(
\newword{factorial number system}
or
\newword{factorial base system}
)
of
$r
	\in
[0,1)
$
is a sequence
$
\mathrm{Cant}(
	r)
=
(
c_1,
c_2,
c_3,
\cdots
)
$
such that
$c_k \in \mathbb{N}, (0 \le c_k \le k)$
and
\begin{equation}
\label{eq:CantorExp}
r
	=
\sum_{k=1}^{n}
	\frac{c_k}{(k+1)!}
\end{equation}
where
$n$ is an integer or $\infty$.
\end{defn}

\begin{exmp}
Here are some examples of
the Cantor expansions $\{c_k\}$ of real numbers $r \in [0,1)$.
\begin{enumerate}
	\item
$
\mathrm{Cant}(
\frac{1}{9}
) =
(0, 0, 2, 3, 2) ,
$
  \item
$
\mathrm{Cant}(
e - 2
) =
(1, 1, 1, \cdots ) ,
$
  \item
$
\mathrm{Cant}(
\frac{3}{13}
) =
(0, 1, 1, 2, 4, 1, 0, 5, 5, 4, 2, 10) .
$

\end{enumerate}
\end{exmp}

Since
\[
r
	=
\sum_{k=1}^{n}
	\frac{c_k}{(k+1)!}
	=
\frac{1}{2}
\big(
	c_1
		+
	\frac{1}{3}
	\big(
		c_2
			+
		\frac{1}{4}
		\big(
			c_3
				+
			\cdots
		\big)
	\big)
\big),
\]
we can calculate a Cantor expansion of a real number
$r$
by the following algorithm:
\begin{lstlisting}
function CantorExpansion(real r) { # output sequence is stored in c[k]
	k := 0
	r[k+1] := r
	do {
		k := k+1
		Pick c[k] in {0, 1, ..., k }
			such that  c[k] <= (k+1) * r[k] < c[k] + 1
		r[k+1] := (k+1) * r[k] - c[k]
		break if r[k+1] = 0
	}
}
\end{lstlisting}

For a given encoded number
$r$,
we can recover its corresponding Candor Expansion by executing the above algorithm ``CantorExpansion''.
Note that in the execution, we will get a \textit{next} branch $c_k$ in the context $r$ one by one in the loop of the algorithm.
This is exactly same as the behavior of the person living in $\bSigma$ who is getting to know the next branch at time 
$\twc{k}{c}$.

The following proposition says that this algorithm will terminate if and only if 
$r$ is a rational number.

\begin{prop}
\label{prop:RatCantorExp}
Let 
$r
	\in [0, 1)
$
be a real number.
There exists a 
Cantor expansion of $r$ 
whose length is finite if and only if
$r$ is a rational number.
Actually, if 
$
r = \frac{m}{n}
$
with natural numbers
$m, n$
such that
$n > 0$ and $0 \le m < n$,
then
the length of the Cantor expansion of $r$ is less than $n$.
\end{prop}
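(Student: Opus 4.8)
The plan is to prove both directions by tracking the sequence of remainders $r_1, r_2, \dots$ produced by the algorithm \textbf{CantorExpansion}, where $r_1 = r$ and $r_{k+1} = (k+1) r_k - c_k$ with $c_k = \lfloor (k+1) r_k \rfloor \in \{0, 1, \dots, k\}$. The easy direction is that a finite expansion forces $r$ to be rational: if the loop halts at step $n$, then $r = \sum_{k=1}^{n} \frac{c_k}{(k+1)!}$ is a finite sum of rationals and hence rational. So the substance lies in the converse together with the quantitative length bound.

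For the converse, suppose $r = \frac{m}{n}$ with $0 \le m < n$. First I would establish, by induction on $k$, the invariant that each remainder has the form $r_k = \frac{m_k}{n}$ with $m_k \in \mathbb{Z}$ and $0 \le m_k < n$. Indeed $m_1 = m$, and from $r_{k+1} = (k+1) r_k - c_k = \frac{(k+1) m_k - c_k n}{n}$ together with $0 \le r_{k+1} < 1$, the numerator $m_{k+1} := (k+1) m_k - c_k n$ is again an integer lying in $[0, n)$. Thus the denominator $n$ is preserved throughout the computation, and the numerators obey the congruence $m_{k+1} \equiv (k+1)\, m_k \pmod{n}$.

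Next I would iterate this congruence to the closed form $m_k \equiv k!\, m \pmod{n}$, by a short induction. The algorithm halts precisely when some remainder vanishes, that is, when $m_{k+1} = 0$; since $0 \le m_{k+1} < n$, this is equivalent to $n \mid (k+1)!\, m$. Taking $k+1 = n$ and using $n \mid n!$ gives $m_n \equiv n!\, m \equiv 0 \pmod{n}$, so $m_n = 0$ and the loop has terminated no later than step $n-1$. Hence the computed sequence $(c_1, c_2, \dots)$ has length at most $n-1$, which is strictly less than $n$ and in particular finite. This single estimate simultaneously yields that rationality implies finiteness and the stated bound.

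I expect the only delicate points to be bookkeeping rather than conceptual: correctly reading off the halting index, namely the off-by-one between the remainder $r_{k+1}$ that vanishes and the length $k$ of the produced sequence, and disposing of the degenerate case $r = 0$, which is immediate. The heart of the argument is the observation that the factorial-base algorithm preserves the denominator and realizes the multiplicative recursion $m_k \equiv k!\, m \pmod{n}$, after which the divisibility $n \mid n!$ delivers termination at once.
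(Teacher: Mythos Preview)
Your proof is correct and takes a genuinely different route from the paper's. The paper argues via the prime factorization $n = q_1^{p_1}\cdots q_\ell^{p_\ell}$: it exhibits an integer $N$ (essentially $\max_j (p_j-1)q_j$) with $n \mid N!$ and $N < n$, and then notes that $r = m/n = sm/N!$ forces a finite factorial-base expansion. You instead track the algorithm itself, maintaining the invariant $r_k = m_k/n$ with $0 \le m_k < n$ and the congruence $m_k \equiv k!\,m \pmod n$, and invoke only the trivial divisibility $n \mid n!$ to force $r_n = 0$ and hence termination by step $n-1$. Your approach is more elementary (no prime factorization needed) and makes the connection to the halting of the algorithm completely explicit; the paper's approach is implicitly aiming at the sharper length bound given by the least $N$ with $n \mid N!$ (the Kempner function of $n$), but since the proposition only asks for length $< n$, your cruder divisibility $n \mid n!$ suffices and delivers the result with less machinery.
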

\begin{proof}
Suppose $n$
has a prime factorization:
$
n = 
q_1^{p_1}
q_2^{p_2}
	\cdots
q_{\ell}^{p_{\ell}}
$,
where
$p_j \ge 1$.
Then, for each
$j = 1, \cdots, \ell$,
\[
q_j^{p_j}
	\, \mid \,
q_j \cdot
(2  q_j)
	\cdots
(p_j  q_j)
	\, \mid \,
(p_j  q_j)! .
\]
Since 
$q_j$
is a prime number,
$n$
devides
$
N!
$,
where
\[
N
	:=
\max_{1 \le j \le \ell}
	(p_j  q_j) .
\]
Now,
let $s$
be an integer satisfying
$
N! = sn
$.
Then,
we have
$
r
	=
\frac{m}{n}
	=
\frac{sm}{N!}
$,
which implies that the length of the Cantor expansion of $r$ is finite.

The remaining is to show  that
$
N < n
$.
We will show this by proving 
$
p_j q_j \le q_j^{p_j}
$
for all $j$.
But it is easily achieved by the induction on
$p_j$.

\end{proof}

Any context 
$c \in \Sigma_0$
can be encoded as a real number in
$[0,1)$
by the isomorphism:
\[
\mathrm{Cant}
	:
[0,1)
	\to
\Sigma_0 .
\]

By Proposition \ref{prop:RatCantorExp},
any finite context $c$
(i.e. $c_i = 0$ for all but finite $i$'s)
can be encoded uniquely by a rational number in
$[0, 1)$.

\lspace

Now, we define our time domain,
a category called
$\bSigma$.

\begin{defn}{[Category $\bSigma$]}
\label{defn:univSpV}
\begin{enumerate}

\item
$\bSigma$
is the category whose object set is 
$\Sigma$
and the arrow set is given by:
\begin{equation}
\label{eq:SigmaArrowSet}
\bSigma(
	\twc{t}{c} ,
	\twc{t'}{c'} 
)
	:=
\begin{cases}
\bDelta(
	[t],
	[t']
)
		& \textrm{if }
c \sim_{(t \lor t')+1} c' ,
		\\
\emptyset
		& \textrm{otherwise} .
\end{cases}
\end{equation}

\item
For
$
c \in \Sigma_0
$,
the map
$
\delta^t_{c,i}
	:
\twc{t-1}{c}
	\to
\twc{t}{c}
$
is defined by
\begin{equation}
\label{eq:deltaContextI}
\delta^t_{c,i}
	:=
\delta^t_{i}.
\end{equation}
Especially, we write
\begin{equation}
\label{eq:deltaContext}
\delta^t_c
	:=
\delta^t_{c,c_t} .
\end{equation}

\item
For
$
c \in \Sigma_0
$,
the map
$
\sigma^t_{c,j}
	:
\twc{t+1}{c}
	\to
\twc{t}{c}
$
is defined by
\begin{equation}
\label{eq:sigmaContextJ}
\sigma^t_{c,j}
	:=
\sigma^t_{j}.
\end{equation}
Especially, we write
\begin{equation}
\label{eq:sigmaContext}
\sigma^t_c
	:=
\sigma^t_{c,c_t} .
\end{equation}

\item
The functor 
$
        U_{\Sigma\bDelta}
:
\bSigma \to \bDelta
$
is defined by 
for 
$t \in \mathbb{N}$
and
$c \in \Sigma_0$,
\[
\xymatrix@C=40 pt@R=10 pt{
	\bSigma
        \ar @{->}^{U_{\Sigma\Delta}} [r]
&
    \bDelta
\\
	\twc{t-1}{c}
        \ar @{->}_{f} [dd]
&
	[t-1]
        \ar @{->}_{f} [dd]
\\\\
	\twc{t}{c}
&
	[t]
}   
\]

\end{enumerate}
\end{defn}

We use a same notation of forgetful functor for this 
because it actually forgets context.


For example, a context
$c \in \Sigma_0$,
$
\twc{2}{c}
$
represents a tree with root 
$
[2]
	=
U_{\Sigma\bDelta} (\twc{2}{c})
$
like the following:
\[
\xymatrix@C=40 pt@R=5 pt{
	[0]
		\ar @{->}^{\delta^1_{0}} [rd]
\\
&
	[1]
		\ar @{->}^{\delta^2_{0}} [rdddd]
\\
	[0]
		\ar @{->}^{\delta^1_{1}} [ru]
\\
\\
	[0]
		\ar @{->}^{\delta^1_{0}} [rd]
\\
&
	[1]
		\ar @{->}^{\delta^2_{1}} [r]
&
	[2]
		\ar @{->}^{\delta^3_{c_3}} [r]
&
	[3]
		\ar @{->}^{\delta^4_{c_4}} [r]
&
	\cdots
\\
	[0]
		\ar @{->}^{\delta^1_{1}} [ru]
\\
\\
	[0]
		\ar @{->}^{\delta^1_{0}} [rd]
\\
&
	[1]
		\ar @{->}^{\delta^2_{2}} [ruuuu]
\\
	[0]
		\ar @{->}^{\delta^1_{1}} [ru]
}   
\]

\begin{figure*}[tb] 

\begin{tikzpicture}
	\draw [->, very thin] (1, -5.3) -- (1,0) coordinate(g) node[right] {time};



	\draw[dashed] (4,-4.5) arc (170:10:1.5cm and 0.4cm)coordinate[pos=0] (e);
	\draw (4,-4.5) arc (-170:-10:1.5cm and 0.4cm)coordinate (f);
	\draw (e) -- ([yshift=2.5cm]$(e)!0.5!(f)$) -- (f);

	\draw (5.5, -2.0) coordinate(g) node[right] {\textblue{tomorrow} $:= \twc{t+1}{c}$}  -- (5.5, 0);
	\node at (5.5, -2.0)[circle,fill,inner sep=1.5pt]{};

	\draw (1.1, -2.0) coordinate(g) node[right] {$t+1$};
	\node at (1, -2.0)[circle,fill,inner sep=1.5pt]{};

	\draw (5.5, 0) coordinate(g) node[right] {$c$}  -- (5.5, -2.5);


	\draw (5.8, -2.5) coordinate(g) node[right] {\textred{today} $:= \twc{t}{c}$};
	\node at (5.5, -2.5)[circle,fill,inner sep=1.5pt]{};

	\draw (1.1, -2.5) coordinate(g) node[right] {$t$};
	\node at (1, -2.5)[circle,fill,inner sep=1.5pt]{};
	
	\draw[dashed] (4,-5) arc (170:10:1.5cm and 0.4cm)coordinate[pos=0] (c);
	\draw (4,-5) arc (-170:-10:1.5cm and 0.4cm)coordinate (d);
	\draw (c) -- ([yshift=2.5cm]$(c)!0.5!(d)$) -- (d);



\end{tikzpicture}

\caption{Time evolution in $\bSigma$}

\label{fig:timeEvoSigma}
\end{figure*}

\begin{figure*}[tb] 
\begin{tikzpicture}

\coordinate (A) at (2,0);
\coordinate (B) at (2,3);
\coordinate (C) at (4.5,1.5);
\fill (A) circle (2pt);
\fill (B) circle (2pt);
\fill (C) circle (2pt);
\draw (A) -- (B) -- (C) --cycle;

\coordinate (A1) at (0,0);
\coordinate (B1) at (0,3);
\fill (A1) circle (2pt);
\fill (B1) circle (2pt);
\draw (A1) -- (B1);

\coordinate (B2) at (3, 5);
\coordinate (C2) at (5.5,3.5);
\fill (B2) circle (2pt);
\fill (C2) circle (2pt);
\draw (B2) -- (C2);

\coordinate (A3) at (3.0,-2);
\coordinate (C3) at (5.5,-0.5);
\fill (A3) circle (2pt);
\fill (C3) circle (2pt);
\draw (A3) -- (C3);

\draw[->, very thick, color=black] (0.5, 1.5) -- (1.5, 1.5);
\draw[->, very thick, color=black] (4, 3.9) -- (3.5, 2.6);
\draw[->, very thick, color=black] (4, -0.95) -- (3.55, 0.25);

\coordinate (AB) at (2.4,1.5) node at (AB) [right, color=black] {future};
\coordinate (AB1) at (-1.8, 1.5) node at (AB1) [right, color=black] {this now};
\coordinate (BC2) at (4.25, 4.5) node at (BC2) [right, color=black] {another now};
\coordinate (AC3) at (4.25, -1.5) node at (AC3) [right, color=black] {yet another now};

\end{tikzpicture}
\caption{Future is synthesized in many ``nows''}

\label{fig:manyNows}
\end{figure*}

In the rest of this paper,
we focus on a filtration
$
X : \bSigma^{op} \to \Prob
$
making the following diagram commute:
\[
\xymatrix@C=30 pt@R=30 pt{
	\bSigma^{op}
        \ar @{->}^{X} [rr]
        \ar @{->}_{U_{\Sigma \Delta}} [d]
&&
	\Prob
        \ar @{->}^{U_{PM}} [d]
\\
	\bDelta^{op}
        \ar @{->}^{\mathcal{R}_0} [r]
&
	\Top
        \ar @{->}^{\mathcal{B}} [r]
&
	\Mble
}   
\]
In other words,
the following equation holds:
\begin{equation}
\label{eq:commuXUPMeqBRUsigma}
U_{PM} \circ X
	=
\bar{\mathcal{R}} \circ U_{\Sigma\Delta} ,
\end{equation}
reminding that
\[
\bar{\mathcal{R}}
	:=
\mathcal{B} \circ \mathcal{R}_0 .
\]

Since
$
X
\delta^t_c
$
is null-preserving as an arrow of $\Prob$
for
$
\twc{t}{c}
	\in
\Sigma
$,
we have
\begin{equation}
\label{eq:muXdelta}
\mathbb{P}^c_{t-1}
	\circ
(d^t_{c,i})^{-1}
	\ll
\mathbb{P}^c_{t} ,
\end{equation}
where
$
\mathbb{P}^c_{t}
$
is a probability measure
of the probability space
$
X \twc{t}{c}
$,
that is,
\begin{equation}
\label{eq:XsingleTCsigma}
X \twc{t}{c}
	=
(
	\bar{\mathcal{R}}([t]),
	\mathbb{P}^c_{t}
)
	=
(
	\Delta_t,
	\mathcal{B}(\Delta_t),
	\mathbb{P}^c_{t}
)
\end{equation}
and
\[
d^t_{c,i}
	:=
X
\delta^t_{c,i}
	=
\bar{\mathcal{R}}
(
	U_{\Sigma \Delta}
	\delta^t_{c,i}
)
	=
\bar{\mathcal{R}}
\delta^t_{i} 
	=
d^t_{i} .
\]

We use the word
\newword{$\bSigma$-filtration}
only for denoting such filtration
$X$.

\section{Homological analysis of $\bSigma$-filtration}
\label{sec:homoSigma}

\nocite{hatcher2002}
\nocite{GQ2019}

The categorical construction of a $\bSigma$-filtration 
describes how probabilistic information flows through contextual time.
However, 
the structure of this flow itself possesses internal consistencies and obstructions, 
much like the topological structure of a simplicial complex.
To capture these features algebraically, 
we introduce a homological framework for $\bSigma$-filtrations.
Here, 
conditional expectations play the role of boundary operators, 
and their compositions reveal how information coheres — or fails to cohere — across contexts.

Let
$X$
be a $\bSigma$-filtration, i.e. 
$
X : \bSigma^{op} \to \Prob
$
throughout this section.

\subsection{Chain complex derived from $\bSigma$-filtration}
\label{sec:CCF}

\begin{defn}
\label{defn:HomProbTh}
Let
$
c \in \Sigma_0
$
be a context.
\begin{enumerate}

\item
$
C^c_t
	:=
L^1(X \twc{t}{c})
	=
L^1(\Delta_t, \mathcal{B}(\Delta_t), \mathbb{P}^c_t)
$.

\item
For $n \ge 1$,
a differential operator 
$
\partial^c_t
	:
C^c_t
	\to
C^c_{t-1}
$
is defined by:
\begin{equation}
\label{eq:DeltaDiffOp}
\partial^c_t
	:=
\sum_{i=0}^{t}
	(-1)^i
	\mathcal{E}(d^t_{c,i}) .
\end{equation}

\end{enumerate}
\end{defn}

Here, 
$
\mathcal{E}(d^t_{c,i})
$
acts like "averaging out" along the 
$i$-th direction.
Actually, 
by Theorem \ref{thm:condExp},
the differential operator
$\partial^c_t$
has the following property.

\begin{prop}
\label{prop:partialEq}
For
$f \in C_t$,
the random variable
$
\partial^c_t(f)
	\in
C^c_{t-1}
$
satisfies
for every
$B \in \mathcal{B}(\Delta_{n-1})$,
\begin{equation}
\label{eq:partialEq}
\int_B
	\partial^c_t(f)
d \mathbb{P}^c_{t-1}
		=
\sum_{i=0}^{t}
	(-1)^i
	\int_{
		(d^t_{c,i})^{-1}(B)
	}
		f
	d \, \mathbb{P}^c_t .
\end{equation}
\end{prop}

\begin{prop}
\label{prop:CompDiffOp}
For
$
t \ge 2
$,
we have
$
\partial^c_{t-1}
	\circ
\partial^c_t
	= 0
$.
\end{prop}
\begin{proof}
The proof has basically the same structure as those in classical simplicial homology.

First, for 
$f \in C^c_t$, we have
\begin{align*}
(\partial^c_{t-1} \circ \partial^c_t)(f)
			&=
\partial^c_{t-1}(
	\Big(
		\sum_{j=0}^{t}
			(-1)^j
			\mathcal{E}(d^t_{c,j})(f)
	\Big)
)
			\\&=
\sum_{i=0}^{t-1}
	(-1)^i
	\mathcal{E}(d^{t-1}_{c,i})
	\Big(
		\sum_{j=0}^{t}
			(-1)^j
			\mathcal{E}(d^t_{c,j})(f)
	\Big)
			\\&=
\sum_{i=0}^{t-1}
\sum_{j=0}^{t}
	(-1)^{i+j}
	\big(
		\mathcal{E}(d^{t-1}_{c,i})
			\circ
		\mathcal{E}(d^t_{c,j})
	\big)
	(f)
			\\&=
\sum_{i=0}^{t-1}
\sum_{j=0}^{t}
	(-1)^{i+j}
	\mathcal{E}(d^{t-1}_{c,i} \circ d^t_{c,j})
	(f).
\end{align*}
Thus,
\[
\partial^c_{t-1} \circ \partial^c_t
		=
\sum_{i=0}^{t-1}
\sum_{j=0}^{t}
	(-1)^{i+j}
	\mathcal{E}(d^{t-1}_{c,i} \circ d^t_{c,j}) .
\]

Now by
(\ref{eq:SOdeltaDelta}),
\[
d^{t-1}_{c,i} \circ d^t_{c,j}
	=
d^{t-1}_{c,j-1} \circ d^t_{c,i}
	\quad \textrm{for} \quad
i < j .
\]
Thus,
when we compute the double sum, for each 
$
(j, i)
$
with $j > i$,
we group terms corresponding to 
$(j, i)$
and
$(j-1, i)$,
and due to the alternating signs, these terms cancel out.
More precisely speaking,
let
$A := [n-1] \times [n]$,
$A_1 := \{ (i, j) \in A \mid i < j\}$
and
$A_2 := \{ (i, j) \in A \mid i \ge j\}$.
Then, 
$A_1 \cup A_2 = A$,
$A_1 \cap A_2 = \emptyset$,
while
the map
$
\varphi : A_1 \to A_2
$
defined by
$\varphi(i,j) := (j-1, i)$
is bijective.
Therefore, the above cancellation is well-done.
Hence,
$
\partial^c_{t-1}
	\circ
\partial^c_{t}
	= 0
$.

\end{proof}

By Proposition \ref{prop:CompDiffOp},
we have a chain complex per context:
\[
\xymatrix@C=20 pt@R=5 pt{
	0
&
	C^c_0
		\ar @{->}_{\partial^c_0} [l]
&
	C^c_1
		\ar @{->}_{\partial^c_1} [l]
&
	C^c_2
		\ar @{->}_{\partial^c_2} [l]
&
	\cdots
		\ar @{->}_{\partial^c_3} [l]
}
\]
where
the zero-map
$
\partial^c_0 : C^c_0 \to 0
$
is introduced for completeness, 
though originally, there are no boundary below degree $0$.

We are building a new kind of homology theory based on filtrations in probability.

\subsection{Homology of $\bSigma$-filtration}
\label{sec:HomoFilt}

Let
$
c \in \Sigma_0
$
be a context.
We move on to investigate the shape of homology group:
\begin{equation}
\label{eq:DhomologyGroup}
H^c_t := \ker(\partial^c_t) / \im(\partial^c_{t+1}),
\end{equation}
which is the quotient Abelian group consisting of subsets of 
$
C^c_t
	=
L^1(X \twc{t}{c})
$,
and is well-defined by the fact that
$
\im(\partial^c_{t+1})
	\subset
\ker(\partial^c_t)
	\subset
C^c_t
$
which is derived from
Proposition \ref{prop:CompDiffOp}.
We can think that $H^c_t$ measures the probabilistic ``holes'' or ``obstructions'' in stitching together conditional expectations over time.

Note that 
$ H^{c'}_t $
may not coincide with
$ H^c_t $
even if
$
c
	\sim_{t+1}
c'
$,
that is, if
$
\twc{t}{c}
	=
\twc{t}{c'}
$.
Therefore,
$H$
cannot be defined as a functor
over the category $\bSigma$.

Now,
let us compute
$H^c_0$, the zeroth homology group.
Since 
$
\partial^c_0 : C^c_0 \to 0
$,
this simplifies to:
\[
H^c_0
	=
C^c_0 / \im(\partial^c_1) .
\]
Here,
$
C^c_0
	=
L^{1}(X \twc{0}{c})
$
is the space of all integrable random variables at the initial time.
Since
$
\partial^c_1
	=
\mathcal{E}(d^1_{c,0})
	-
\mathcal{E}(d^1_{c,1})
$,
$
\im(\partial^c_1)
	\subset C^c_0
$
consists of functions that can be written as conditional expectations of functions from time $1$.
So,
$H^c_0$
measures the degree of expected deviation 
from the center due to the development to time $1$
along the context $c$.

Next let us compute
$
H^c_1
	=
\ker(\partial^c_1) / \im(\partial^c_2) 
$.
Here, we have
\[
\ker(\partial^c_1)
	=
\{
	f \in C^c_1
\mid
	\mathcal{E}(d^1_{c,0})(f)
		=
	\mathcal{E}(d^1_{c,1})(f)
\} .
\]
Therefore,
$H^c_1$
measures the space of ``$1$-cycles'' — functions at time $\twc{1}{c}$ whose conditional expectations at time $0$ cancel out
which are
like balanced observations from time $1$
— modulo those that come from $2$-step structure (boundary) of  $C^c_2$.

In the simplicial setting, the two face maps 
$d^1_{c,0}$
and
$d^1_{c,1}$
encode \textit{different past perspectives}, 
and
$\ker(\partial^c_1)$
encodes processes whose conditional expectations \textit{align} along all face maps
— like martingale consistency.

In general,
$H^c_t$
detects $t$-cycles or dependencies in the filtration 
that cannot be resolved by $(t+1)$-step behavior 
— a kind of coherence or anomaly in the conditional structure of the process over time.

Intuitionally, 
we can see
$H^c_0$
as
observable randomness at the present 
not explained by conditional expectations from the next time step,
$H^c_1$
as
balanced but non-trivial dependencies in the conditional structure 
— cycles in how expectations evolve,
and
for $t > 1$,
$H^c_t$
as
increasingly subtle “obstructions” 
to coherently reconstructing the filtration from expectations.

\section{Dirichlet filtration}
\label{sec:DirichletFilt}

To illustrate the abstract framework of 
$\bSigma$-filtrations with a concrete probabilistic model, 
we introduce the notion of a \emph{Dirichlet filtration}. 
The guiding idea is simple: 
simplices are natural carriers of probability measures, 
and the Dirichlet distribution provides a flexible family of such measures. 
Just as the Beta distribution plays a central role in one-dimensional Bayesian analysis, 
the Dirichlet distribution generalizes this role to higher-dimensional simplices, 
serving as the canonical conjugate prior for multinomial models. 
By assigning Dirichlet distributions to the simplices 
that arise in the geometric realization of 
$\bSigma$, 
we obtain a filtration that reflects both stochastic variability (through the distributional parameters) 
and contextual variability 
(through the choice of paths in $\bSigma$). 
This construction allows us to study uncertainty in a dual sense—uncertainty 
about future outcomes and 
uncertainty about which contextual past has given rise to the present.

\subsection{Dirichlet distribution}
\label{sec:DirichletDist}

This subsection is a review of
Dirichlet distributions on $n$-simplex.
Please refer to
\cite{ferguson_1973}
for the further details.

\begin{prop}
\label{prop:DeltaNvol}
Let
$\lambda_n$
be the Lebesgue measure on 
$\mathbb{R}^n$.
\begin{enumerate}
\item
$
\lambda_{n+1}(\Delta_n)
	=
\frac{1}{n!}
$,

\item
$
n! \lambda_{n+1}
$
is the uniform measure on 
$
\Delta_n
$.
\end{enumerate}
\end{prop}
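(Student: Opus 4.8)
The plan is to reduce the statement about the $n$-dimensional probability simplex $\Delta_n \subset \mathbb{R}^{n+1}$ to a classical volume computation on a coordinate simplex, and then to read off (2) immediately from (1). For (1), I would first use the constraint $\sum_{i=0}^{n} w_i = 1$ appearing in \eqref{eq:stdTopNsymplex} to eliminate the last weight, writing $w_n = 1 - \sum_{i=0}^{n-1} w_i$. This identifies $\Delta_n$, as a measure space, with the corner simplex
\[
T_n := \Big\{ (w_0, \dots, w_{n-1}) \in \mathbb{R}^n : w_i \ge 0, \ \sum_{i=0}^{n-1} w_i \le 1 \Big\},
\]
the free coordinates $(w_0, \dots, w_{n-1})$ serving as a global chart in which the relevant volume is $\lambda_n(T_n)$. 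The core of the argument is then the evaluation $\lambda_n(T_n) = 1/n!$.

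I would prove $\lambda_n(T_n) = 1/n!$ by induction on $n$ via Fubini's theorem. The base case $n=1$ is $T_1 = [0,1]$, with $\lambda_1(T_1) = 1 = 1/1!$. For the inductive step, slicing at a fixed value $w_0 = s \in [0,1]$ exhibits the slice as the scaled copy $(1-s)\,T_{n-1}$, so by homogeneity of Lebesgue measure $\lambda_{n-1}\big((1-s)T_{n-1}\big) = (1-s)^{n-1}/(n-1)!$ using the inductive hypothesis. Integrating out $w_0$ gives
\[
\lambda_n(T_n) = \int_0^1 \frac{(1-s)^{n-1}}{(n-1)!}\, ds = \frac{1}{n!},
\]
which closes the induction. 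Alternatively, one may observe that the $n!$ order-simplices $\{\, x_{\pi(1)} \le \dots \le x_{\pi(n)}\,\}$, $\pi \in S_n$, tile the unit cube $[0,1]^n$ up to a null set into congruent pieces, each an affine unimodular image of $T_n$; since the cube has volume $1$, each piece has volume $1/n!$.

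For (2), the uniform measure on $\Delta_n$ is by definition the normalization of the restricted Lebesgue measure to total mass $1$; by part (1) the normalizing constant is $1/\lambda_{n+1}(\Delta_n) = n!$, so $n!\,\lambda_{n+1}$ has total mass $n! \cdot \tfrac{1}{n!} = 1$ and constant density, hence equals the uniform distribution. The only genuinely delicate point is the very first step: since $\Delta_n$ is an $n$-dimensional set embedded in $\mathbb{R}^{n+1}$, the symbol $\lambda_{n+1}(\Delta_n)$ must be read as the volume in the coordinate chart $(w_0,\dots,w_{n-1})$ (equivalently, the pushforward of Lebesgue measure under the projection forgetting $w_n$), rather than as genuine $(n+1)$-dimensional Lebesgue measure, which would vanish on a degenerate simplex. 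Once this identification is fixed, the remainder is the routine induction above, and I expect no further obstacle.
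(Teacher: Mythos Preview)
Your proposal is correct and follows essentially the same route as the paper: an induction via Fubini, slicing off one coordinate and recognizing the slice as a scaled lower-dimensional simplex. The paper packages this by introducing auxiliary functions $u_n(t)$ with $u_1(t)=t$ and $u_{n+1}(t)=\int_0^t u_n(t-x)\,dx$, then verifying $u_n(t)=t^n/n!$ and reading off $\lambda_{n+1}(\Delta_n)=u_n(1)$; your version simply invokes homogeneity of Lebesgue measure to write the slice volume as $(1-s)^{n-1}/(n-1)!$ directly, which amounts to the same computation. Your additional remarks --- the alternative tiling-of-the-cube argument and the clarification that $\lambda_{n+1}(\Delta_n)$ must be read in the chart $(w_0,\dots,w_{n-1})$ rather than as genuine $(n{+}1)$-dimensional measure --- are not in the paper but are welcome; the paper leaves that identification implicit.
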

\begin{proof}
For $n = 1, 2, 3, \cdots$, 
define functions
$
u_n : [0,1] \to \mathbb{R}
$
inductively by:
\[
u_1(t) := t,
		\quad
u_{n+1}(t)
	:=
\int_0^t
	u_n(t-x) 
	\,
d x ,
		\quad
	(t \in [0,1]).
\]
Then, we can easily show that
$
u_n(t) = \frac{t^n}{n!} ,
$
and so
$
\lambda_{n+1}(\Delta_n) =
u_n(1) = \frac{1}{n!}.
$

\end{proof}

The following definition of the gamma distribution is the version generalized by Ferguson
\cite{ferguson_1973}.

\begin{defn}{[Gamma distribution]}
\label{defn:gammaDist}
\begin{enumerate}
  \item
The \newword{gamma function}
$
\Gamma(\alpha)
$
is the function defined by for $\alpha > 0$,
\begin{equation}
\label{eq:GammaFunc}
\Gamma(\alpha)
	:=
\int_0^{\infty}
	x^{\alpha - 1}
	e^{-x} 
d x.
\end{equation}

\item
We denote by
$
\Gamma(\alpha, \lambda)
$
the \newword{gamma distribution} with shape parameter
$
\alpha \ge 0
$
and scale parameter
$
\lambda > 0
$.
For
$
\alpha = 0
$,
this distribution is degenerated at zero, that is, 
it becomes the Dirac delta function
$
\delta(x)
$;
for
$
\alpha > 0
$,
this distribution has density with respect to Lebesgue measure on the real line
\begin{equation}
\label{eq:gammaDist}
f_{
	\Gamma(\alpha, \lambda)
}
(x)
	:=
\begin{cases}
\frac{\lambda^{\alpha}}{\Gamma(\alpha)}
x^{\alpha - 1}
e^{- \lambda x}
		& \textrm{if }
x > 0,
		\\
0
		& \textrm{otherwise.}
\end{cases}
\end{equation}

\end{enumerate}
\end{defn}

Adopting the Dirac delta function when 
$\alpha = 0$
in Definition \ref{defn:gammaDist} (2)
is reasonable because
it is the limit distribution of
that of
any random variable
$
X \sim
\Gamma(\alpha, \lambda)
$
for
$\alpha > 0$,
whose expectation and variance are
$
\mathbb{E}[X]
	=
\frac{\alpha}{\lambda}
$
and
$
\var(X)
	=
\frac{\alpha}{\lambda^2}
$.

\begin{defn}
\label{defn:manyRs}
Let
$
n \ge 1
$
be an integer.
\begin{enumerate}
\item
$
\mathbb{R}_+^n
	:=
\{
	(x_0, \cdots, x_{n-1})
		\in
	\mathbb{R}^n
\mid
	x_i \ge 0
	\textrm{ for all $i \in [n-1]$}
\} ,
$

\item
$
\mathbb{R}_{\#}^n
	:=
\{
	(x_0, \cdots, x_{n-1})
		\in
	\mathbb{R}_+^n
\mid
	x_i > 0
	\textrm{ for some $i \in [n-1]$}
\} ,
$

\item
$
\mathbb{R}_{++}^n
	:=
\{
	(x_0, \cdots, x_{n-1})
		\in
	\mathbb{R}^n
\mid
	x_i > 0
	\textrm{ for all $i \in [n-1]$}
\} .
$
\end{enumerate}
\end{defn}

\begin{defn}{[Multivaliate beta function]}
\label{defn:multiBetaFunc}
Let 
$n \ge 1$
be an integer,
and
$
\boldsymbol{\alpha}
	=
(\alpha_0, \alpha_1, \cdots, \alpha_n) 
	\in
\mathbb{R}_{++}^{n+1}
$
be a vector.
\begin{enumerate}
  \item
For
$
\mathbf{x}
	=
(x_0, \cdots, x_n)
	\in
\Delta_n
$,
\begin{equation}
\label{eq:funcPiN}
\mathrm{b}_n(
	\mathbf{x}, 
	\boldsymbol{\alpha}
)
	:=
\prod_{i=0}^{n}
	x_i^{\alpha_i -1} .
\end{equation}

\item
$
\mathrm{B}_n
$
is the function, called \newword{the multivariate beta function}, 
defined by
\begin{equation}
\label{eq:multiBetaFun}
\mathrm{B}_n(
	\boldsymbol{\alpha}
)
	:=
\frac{
	\prod_{i=0}^{n}
		\Gamma(\alpha_i)
}{
	\Gamma\Big(
		\sum_{i=0}^n
			\alpha_i
	\Big)
}.
\end{equation}

\end{enumerate}
\end{defn}

The following fact is well-known.

\begin{prop}
\label{prop:multiBetaFun}
For
$
	\boldsymbol{\alpha}
=
	(\alpha_0, \cdots, \alpha_n)
\in
	\mathbb{R}_{++}^{n+1}
$,
\begin{equation}
\label{eq:multiBetaFunTwo}
\mathrm{B}_n(
	\boldsymbol{\alpha}
)
		=
\int_{\Delta_n}
	\mathrm{b}_n(
		\mathbf{x},
		\boldsymbol{\alpha}
	)
\, d \mathbf{x}.
\end{equation}
\end{prop}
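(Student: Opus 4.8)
The plan is to deduce the identity from the classical integral representation of the Gamma function, using a single change of variables that separates a radial scale factor from the simplex coordinates; this is the standard route to the Dirichlet normalizing constant. Throughout I read the right-hand integral as an integral against the projected Lebesgue measure $d\mathbf{x} = dx_0 \cdots dx_{n-1}$ on the first $n$ barycentric coordinates, with $x_n = 1 - \sum_{i=0}^{n-1} x_i$, which is the convention already implicit in Proposition \ref{prop:DeltaNvol}.

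First I would start from the product of Gamma integrals. By \eqref{eq:GammaFunc}, for each $i$ one has $\Gamma(\alpha_i) = \int_0^\infty y_i^{\alpha_i - 1} e^{-y_i}\, dy_i$, and since every integrand is nonnegative (here $\alpha_i > 0$), Tonelli's theorem permits multiplying them into a single integral over the open positive orthant:
\[
\prod_{i=0}^{n} \Gamma(\alpha_i)
	=
\int_{\mathbb{R}_{++}^{n+1}}
	\prod_{i=0}^{n} y_i^{\alpha_i - 1}\;
	e^{-\sum_{i=0}^{n} y_i}\,
	dy_0 \cdots dy_n .
\]
Next I would introduce the map $y_i = S x_i$, where $S := \sum_{i=0}^{n} y_i > 0$ is the scale and $x_i := y_i / S$ satisfy $\sum_{i=0}^n x_i = 1$. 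Taking $(x_0, \ldots, x_{n-1}, S)$ as the free coordinates, this is a diffeomorphism from $\mathbb{R}_{++}^{n+1}$ onto the product of the (relatively open) projected simplex with $(0,\infty)$.

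The one genuinely computational step — and the step I expect to be the main obstacle — is the Jacobian. Writing the $(n+1)\times(n+1)$ derivative matrix of $(y_0, \ldots, y_n)$ in $(x_0, \ldots, x_{n-1}, S)$ and adding the first $n$ rows to the last, the bottom row collapses to $(0, \ldots, 0, 1)$ while the upper-left block is $S\,I_n$, so the Jacobian is $S^n$. Substituting and collecting powers of $S$, the factor $\prod_{i=0}^n (S x_i)^{\alpha_i - 1}\cdot S^n$ equals $S^{A - 1}\,\mathrm{b}_n(\mathbf{x}, \boldsymbol{\alpha})$ with $A := \sum_{i=0}^n \alpha_i$, since the exponent simplifies as $(A - (n+1)) + n = A - 1$. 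The integral then factors by Tonelli into
\[
\prod_{i=0}^{n} \Gamma(\alpha_i)
	=
\Big(
	\int_0^\infty S^{A - 1} e^{-S}\, dS
\Big)
\cdot
\Big(
	\int_{\Delta_n}
		\mathrm{b}_n(\mathbf{x}, \boldsymbol{\alpha})\,
	d\mathbf{x}
\Big)
	=
\Gamma(A)
	\int_{\Delta_n}
		\mathrm{b}_n(\mathbf{x}, \boldsymbol{\alpha})\,
	d\mathbf{x} ,
\]
where the first factor is again \eqref{eq:GammaFunc}. Dividing by $\Gamma(A) = \Gamma\big(\sum_{i=0}^n \alpha_i\big)$ and comparing with \eqref{eq:multiBetaFun} yields $\int_{\Delta_n} \mathrm{b}_n(\mathbf{x}, \boldsymbol{\alpha})\, d\mathbf{x} = \mathrm{B}_n(\boldsymbol{\alpha})$, as claimed. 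The only points needing care beyond the Jacobian are the (harmless, by nonnegativity) applications of Tonelli and the bookkeeping that the projected region of integration for $(x_0, \ldots, x_{n-1})$ is exactly the simplex appearing in \eqref{eq:multiBetaFunTwo}.
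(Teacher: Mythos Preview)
Your argument is correct and is precisely the standard derivation of the Dirichlet normalizing constant; the Jacobian computation and the Tonelli applications are handled carefully. Note, however, that the paper does not actually prove this proposition: it is simply introduced as a well-known fact and left without proof, so there is no approach in the paper to compare against. Your write-up therefore supplies exactly the classical justification the paper omits.
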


\begin{defn}{[Dirichlet distribution]}
\label{defn:dirichDist}
Let 
$n \ge 1$
be an integer
and
let
$
\boldsymbol{\alpha}
	=
(\alpha_0, \alpha_1, \cdots, \alpha_n) 
	\in
\mathbb{R}_{\#}^{n+1}
$.
For
$j \in [n]$,
let
$
g_j
	\sim
\Gamma(\alpha_j, 1)
$
be independent random variables.
Then the random variable
$
\mathbf{f}
	=
(f_0, \cdots, f_n)
$
having values in
the standard $n$-simplex
$\Delta_n$,
defined by
\begin{equation}
\label{eq:dirichletRN}
f_i
	:=
\frac{g_i}{\sum_{j=0}^n g_j}
	\quad
(i \in [n]),
\end{equation}
is said to have \newword{the Dirichlet distribution} with parameter
$
\boldsymbol{\alpha}
$,
and is denoted by
$
\mathbf{f}
	\sim
\mathcal{D}_n(
		\boldsymbol{\alpha}
) .
$
Note that, if any
$\alpha_i = 0$,
the corresponding 
$f_i$
is degenerate at zero.
However, 
if
$
\boldsymbol{\alpha} \in \mathbb{R}_{++}^{n+1}
$,
it has a probability density function defined by
\begin{equation}
\label{eq:pnDirichret}
p_f(\mathbf{x})
	=
p_{\mathcal{D}_n}(\mathbf{x}; \boldsymbol{\alpha})
	:=
\frac{
	\mathrm{b}_n(
		\mathbf{x}, 
		\boldsymbol{\alpha}
	)
}{
	\mathrm{B}_n(
		\boldsymbol{\alpha}
	) .
}
\end{equation}
%

\end{defn}

Note that
$
p_{\mathcal{D}n}(x_0, \dots, x_n; 1, \cdots, 1) = n! ,
$
which is the uniform density over
$
\Delta_n
$.
Therefore,
the uniform distribution on $\Delta_n$ is a special case of Dirichlet distribution.

\begin{prop}
\label{prop:DirichletReprod}
Let
$
f = (f_0, \cdots, f_n)
	\sim
\mathcal{D}_n(
	\alpha_0, \cdots, \alpha_n
)
$
be a random variable,
$
0 < \ell \le n
$,
and
$
\{ r_j \}_{j=0}^{\ell}
$
be a sequence of integers such that
$
0 \le r_0 < r_1 < \cdots < \cdots r_{\ell -1} < r_{\ell} = n
$.
Then, we have

\begin{equation}
\label{eq::DirichletReprod}
\Big(
	\sum_{k=0}^{r_0}
		f_k,
	\sum_{k=r_0+1}^{r_1}
		f_k,
	\cdots
	\sum_{k=r_{\ell-1}+1}^{r_{\ell}}
		f_k
\Big)
		\sim
\mathcal{D}_{\ell}
\Big(
	\sum_{k=0}^{r_0} 
		\alpha_k,
	\sum_{k=r_0+1}^{r_1}
		\alpha_k,
	\cdots
	\sum_{k=r_{\ell-1}+1}^{r_{\ell}}
		\alpha_k
\Big) .
\end{equation}
\end{prop}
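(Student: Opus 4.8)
The plan is to argue directly from the gamma representation of the Dirichlet distribution in Definition \ref{defn:dirichDist}, reducing the whole statement to the additive property of the gamma family. For each $k \in [n]$ write $f_k = g_k / S$, where the $g_k \sim \Gamma(\alpha_k, 1)$ are independent and $S := \sum_{j=0}^n g_j$. Collecting the coordinates into the prescribed blocks, each block sum becomes a ratio of partial sums of the $g_k$:
\[
\sum_{k=r_{m-1}+1}^{r_m} f_k
	=
\frac{1}{S}
\sum_{k=r_{m-1}+1}^{r_m} g_k
	=:
\frac{h_m}{S}
	\quad (1 \le m \le \ell),
\]
with the convention $h_0 := \sum_{k=0}^{r_0} g_k$ for the zeroth block.

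The key step is to show that each block sum is again gamma, namely
\[
h_m
	\sim
\Gamma\Big(
	\sum_{k \in I_m}
		\alpha_k,
	\, 1
\Big),
\]
where $I_m$ denotes the $m$-th block of indices. This is the reproductive property of the gamma distribution for a common scale parameter, which I would verify through moment generating functions: since $\mathbb{E}[e^{s g_k}] = (1-s)^{-\alpha_k}$ for $s < 1$, independence of the $g_k$ yields $\mathbb{E}[e^{s h_m}] = (1-s)^{-\sum_{k \in I_m} \alpha_k}$, and this is exactly the moment generating function of $\Gamma(\sum_{k \in I_m} \alpha_k, 1)$.

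Because the blocks $I_0, \ldots, I_\ell$ form a partition of $\{0, 1, \ldots, n\}$ and the $g_k$ are mutually independent, the block sums $h_0, \ldots, h_\ell$ are themselves mutually independent gamma variables of scale $1$, and their total is $\sum_{m=0}^\ell h_m = \sum_{j=0}^n g_j = S$. Hence the vector $(h_0 / S, \ldots, h_\ell / S)$ is precisely the gamma-ratio construction of Definition \ref{defn:dirichDist} applied to the independent gammas $h_0, \ldots, h_\ell$, so it carries the distribution $\mathcal{D}_\ell\big(\sum_{k=0}^{r_0}\alpha_k, \ldots, \sum_{k=r_{\ell-1}+1}^{r_\ell}\alpha_k\big)$, which is the claimed identity.

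I expect the only genuinely delicate point to be the degenerate case allowed by the hypothesis $\boldsymbol{\alpha} \in \mathbb{R}_{\#}^{n+1}$, where some $\alpha_k$ (hence some $g_k$, and possibly an entire block sum $h_m$) may vanish. With the convention that $\Gamma(0, 1)$ is degenerate at zero (Definition \ref{defn:gammaDist}), the moment generating function computation is unaffected, and $S > 0$ almost surely because $\boldsymbol{\alpha} \ne 0$ forces at least one $g_k$ to be a genuinely positive gamma variable; the ratio construction therefore remains well-defined throughout, and the degenerate coordinates behave as required.
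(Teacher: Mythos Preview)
Your argument is correct and follows exactly the route the paper takes: invoke the gamma representation from Definition~\ref{defn:dirichDist} and use the additive property of independent gammas with common scale. The paper's proof is a one-line appeal to these two facts, whereas you spell out the MGF verification and the degenerate case, but the underlying idea is identical.
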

\begin{proof}
This follows directly from the definition of the Dirichlet distribution 
and the additive property of the gamma distribution:
$
f+g \sim \Gamma(a+b, \lambda)
$
for
two independent random variables
$
f \sim \Gamma(a, \lambda)
$
and
$
g \sim \Gamma(b, \lambda)
$.
\end{proof}

\begin{cor}
\label{cor:marginalDiri}
If
$
f \sim \mathcal{D}_n(
		\boldsymbol{\alpha}
)
$,
then
the marginal distribution of
$f_j$
is a $\beta$-distribution:
\[
f_j
	\sim
\beta\Big(\alpha_j,
	\sum_{i=0}^n \alpha_i
		- \alpha_j
\Big) .
\]
\end{cor}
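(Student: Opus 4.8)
The plan is to read off the marginal law directly from the aggregation property already established in Proposition \ref{prop:DirichletReprod}, together with the observation that a $\beta$-distribution is nothing but a one-dimensional Dirichlet distribution. The one preliminary I would record is the exchangeability of the Dirichlet law: since the defining gamma variables $g_0, \dots, g_n$ in (\ref{eq:dirichletRN}) are independent, any permutation of the index set permutes the pairs $(f_i, \alpha_i)$ jointly, so the distribution of $f$ is invariant under simultaneously permuting its coordinates and its parameters. This reduction matters because Proposition \ref{prop:DirichletReprod} only aggregates \emph{consecutive} blocks, and hence literally delivers only the marginals of $f_0$ and $f_n$; exchangeability is what lets me move an arbitrary index $j$ into a corner and then apply the proposition.

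Concretely, after relabelling so that $j$ sits in a corner, I would apply Proposition \ref{prop:DirichletReprod} with the two-block partition consisting of $\{j\}$ and its complement. This yields
\[
\Big(\, f_j,\ \sum_{i \ne j} f_i \,\Big) \sim \mathcal{D}_1\Big(\, \alpha_j,\ \sum_{i \ne j} \alpha_i \,\Big).
\]
On the simplex one has $\sum_{i \ne j} f_i = 1 - f_j$, and $\sum_{i \ne j} \alpha_i = \sum_{i=0}^n \alpha_i - \alpha_j$, so this already exhibits $(f_j,\, 1-f_j)$ as a $\mathcal{D}_1$ vector with exactly the parameters advertised in the statement.

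The final step is to identify a $\mathcal{D}_1$ law with a $\beta$-distribution. Specializing the Dirichlet density (\ref{eq:pnDirichret}) to $n = 1$ gives $p_{\mathcal{D}_1}(x_0, x_1; a, b) \propto x_0^{a-1} x_1^{b-1} = x_0^{a-1}(1-x_0)^{b-1}$ in the single free variable $x_0$, which is precisely the $\beta(a,b)$ density. Substituting $a = \alpha_j$ and $b = \sum_{i=0}^n \alpha_i - \alpha_j$ then gives $f_j \sim \beta\big(\alpha_j,\ \sum_{i=0}^n \alpha_i - \alpha_j\big)$, as claimed. The only point needing genuine care is the passage from the consecutive-block form of Proposition \ref{prop:DirichletReprod} to an arbitrary index $j$, which is exactly what the exchangeability remark supplies; the remaining identification of $\mathcal{D}_1$ with $\beta$ is a one-line density computation, so I anticipate no real obstacle. (If one prefers to bypass the relabelling entirely, the same conclusion follows by arguing directly from (\ref{eq:dirichletRN}): the additive property of the gamma distribution makes $\sum_{i \ne j} g_i$ an independent $\Gamma(\sum_{i \ne j}\alpha_i, 1)$ variable, and $f_j = g_j/(g_j + \sum_{i \ne j} g_i)$ is then the standard gamma ratio whose law is $\beta$; this variant also handles the degenerate parameters in $\mathbb{R}_{\#}^{n+1}$ uniformly.)
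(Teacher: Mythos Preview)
Your proposal is correct and takes essentially the same approach as the paper, which states the result as an unproved corollary of Proposition~\ref{prop:DirichletReprod}. You are simply more explicit than the paper about the two points it leaves tacit: that Proposition~\ref{prop:DirichletReprod} only aggregates consecutive blocks (so one must invoke exchangeability, or equivalently argue directly from the gamma representation as in your parenthetical alternative), and that $\mathcal{D}_1$ is the $\beta$-distribution.
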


\begin{Note}
\label{Note:conjugatePrior}
Let $n$ be a fixed integer.
Consider a random vector
$
\mathbf{r}
	=
(r_0, \cdots, r_n)
	\in
\mathbb{N}^{n+1}
$
following
a multinomial distribution
whose probability mass function is of the form:
\begin{equation}
\label{eq:multiNomialMassF}
p(\mathbf{r})
	=
\binom{N}{r_0 \cdots r_n}
q_0^{r_0}
q_1^{r_1}
	\cdots
q_n^{r_n}
	=
\binom{N}{r_0 \cdots r_n}
\mathrm{b}_n(
	\mathbf{q},
	\mathbf{r} + 1
),
\end{equation}
where
$
N
	:=
\sum_{i=0}^n r_i 
$
and
$
\mathbf{q}
	:=
(q_0, \cdots, q_n)
$.
Now let us assume that the prior distribution of
$
\mathbf{q}
	\in
\Delta_n
$
is a Dirichlet distribution, that is, it has of the form:
\begin{equation}
\label{eq:priorDistM}
p(\mathbf{q})
	=
\frac{
	\mathrm{b}_n(\mathbf{q}, 
		\boldsymbol{\alpha}
	)
}{
	\mathrm{B}_n(
		\boldsymbol{\alpha}
	)
} ,
\end{equation}
where 
$
		\boldsymbol{\alpha}
$
is a hyperparameter.

If we sample this random vector
$
\mathbf{r}
$, than we have
a likelihood function
with a parameter
$
\mathbf{x}
	\in
\Delta_n
$:
\begin{equation}
\label{eq:multiLikelifood}
\mathbb{P}(
	\mathbf{r}
\mid
	\mathbf{q}
		=
	\mathbf{x}
)
			=
\binom{N}{r_0 \cdots r_n}
\mathrm{b}_n(
	\mathbf{x},
	\mathbf{r} + 1
) .
\end{equation}
Then, since the prior distribution of
$\mathbf{q}$
is of the form:
\begin{equation}
\label{eq:qDistDiri}
\mathbf{P}(
	\mathbf{q}
		=
	\mathbf{x}
)
			=
\frac{
	\mathrm{b}_n(
		\mathbf{x},
		\boldsymbol{\alpha}
	)
}{
	\mathrm{B}_n(
		\boldsymbol{\alpha}
	)
}
			\; \sim \;
\mathcal{D}_n(
		\boldsymbol{\alpha}
) ,
\end{equation}
we have the following posterior distribution:
\begin{align}
\mathbf{P}(
	\mathbf{q}
		=
	\mathbf{x}
\mid
	\mathbf{r}
)
			&=
\frac{
	\mathbf{P}(
		\mathbf{r}
	\mid
		\mathbf{x}
	)
	\mathbf{P}(
		\mathbf{x}
	)
}{
	\int_{\Delta_n}
	\mathbf{P}(
		\mathbf{r}
	\mid
		\mathbf{y}
	)
	\mathbf{P}(
		\mathbf{y}
	)
	\,
	d \mathbf{y}
}
			=
\frac{
	\binom{N}{r_0 \cdots r_n}
	\mathrm{b}_n(
		\mathbf{x},
		\mathbf{r} + 1
	)
	\mathrm{b}_n(
		\mathbf{x},
		\boldsymbol{\alpha}
	)
		/
	\mathrm{B}_n(
		\boldsymbol{\alpha}
	)
}{
  \int_{\Delta_n}
	\binom{N}{r_0 \cdots r_n}
	\mathrm{b}_n(
		\mathbf{y},
		\mathbf{r} + 1
	)
	\mathrm{b}_n(
		\mathbf{y},
		\boldsymbol{\alpha}
	)
		/
	\mathrm{B}_n(
		\boldsymbol{\alpha}
	)
  \, d \mathbf{y}
}
			\nonumber
			\\&=
\frac{
	\mathrm{b}_n(
		\mathbf{x},
		\boldsymbol{\alpha}
			+
		\mathbf{r}
	)
}{
  \int_{\Delta_n}
	\mathrm{b}_n(
		\mathbf{y},
		\boldsymbol{\alpha}
			+
		\mathbf{r}
	)
  \, d \mathbf{y}
}
			\nonumber
			=
\frac{
	\mathrm{b}_n(
		\mathbf{x},
		\boldsymbol{\alpha}
			+
		\mathbf{r}
	)
}{
	\mathrm{B}_n(
		\boldsymbol{\alpha}
			+
		\mathbf{r}
	)
}
			\; \sim \;
\mathcal{D}_n(
		\boldsymbol{\alpha}
			+
		\mathbf{r}
) .
\end{align}
So, 
the Dirichlet distribution is a \newword{conjugate prior} of the multinomial distribution,
which is a well-known fact.

\end{Note}

\subsection{Dirichlet filtration}
\label{sec:dirichletFilt}

Let
$
X : \bSigma^{op} \to \Prob
$
be a 
$\bSigma$-filtration.
In this section,
we will assign a concrete probability measure 
$
\mathbb{P}^c_t
$
in the probability space
$
X
(\twc{t}{c})
	=
(\Delta_t, \mathcal{B}(\Delta_t), 
\mathbb{P}^c_t
)
$.

First, we define the probability measure on 
$
(\Delta_n, \mathcal{B}(\Delta_n)
$
induced by the Dirichlet distribution
$
\mathcal{D}_n(\boldsymbol{\alpha})
$.

\begin{defn}{[Probability measure induced by the Dirichlet distribution]}
\label{defn:indPMdiri}
Let
$
\boldsymbol{\alpha} 
	=
(\alpha_0, \cdots, \alpha_n)
	\in
\mathbb{R}_{\#}^{n+1}
$ 
be a parameter.
Define a subset 
$Z$
of
$[n]$
by
\[
Z
	:=
\{
	j \in [n]
\mid
	\alpha_j = 0
\} ,
\]
and let
$
m
	:=
n - \abs{Z} .
$
Then, there exists a map
$
\pi
	:
[m] \to [n]
$
such that
$
j_1 < j_2
$
implies
$
\pi[j_1] < \pi[j_2]
$
for every
$
j_1, j_2
	\in
[m]
$,
and
$
\pi([m])
	=
[n] \setminus Z .
$

For
$
A
	\in
\mathcal{B}(\Delta_n)
$,
define a set
$
\tilde{A}
	\in
\mathcal{B}(\Delta_m)
$
and a parameter
$
\tilde{
	\boldsymbol{\alpha}
}
	\in
\mathbb{R}_{++}^{m+1}
$
by
\begin{align*}
\tilde{A}
	&:=
\{
	(x_{\pi(0)}, \cdots, x_{\pi(m)})
\mid
	(x_0, \cdots, x_n) \in A
		\textrm{ and }
	\forall j \in Z \,.\, x_j = 0
\},
		\\
\tilde{
	\boldsymbol{\alpha}
}
	&:=
(
	\alpha_{\pi(0)}, 
	\cdots,
	\alpha_{\pi(m)} 
) .
\end{align*}
The probability measure
$
\mu_{
	\boldsymbol{\alpha}
}
$
on
$
(\Delta_n, \mathcal{B}(\Delta_n))
$
induced by the Dirichlet distribution
$
\mathcal{D}_n(
	\boldsymbol{\alpha}
)
$
is
defined by
for
$A \in \mathcal{B}(\Delta_n)$,
\begin{equation}
\label{eq:IndMeasByDiriDist}
\mu_{
	\boldsymbol{\alpha}
}
(A)
	:=
\int_{
	\tilde{A}
}
	p_{\mathcal{D}_m}(
		\mathbf{y};
		\tilde{
			\boldsymbol{\alpha}
		}
	)
\, d \mathbf{y}
	=
\int_{
	\tilde{A}
}
	\frac{
		\mathrm{b}_m(\mathbf{y}, 
			\tilde{\boldsymbol{\alpha}}
		)
	}{
		\mathrm{B}_m(
			\tilde{\boldsymbol{\alpha}}
		)
	} 
\, d \mathbf{y} .
\end{equation}

\end{defn}

\begin{defn}{[Category $\mathbf{Diri}$]}
\label{defn:catDiri}
\begin{enumerate}
\item
For
a pair
$
	(n, \boldsymbol{\alpha})
$
with
$
	\boldsymbol{\alpha}
		\in
	\mathbb{R}_{\#}^{n+1}
$,
we assign a probability space
$
\mathbf{p}
	(n, \boldsymbol{\alpha})
$
defined by
\begin{equation}
\label{eq:diriProbSp}
\mathbf{p}
(n, \boldsymbol{\alpha})
		:=
(
	\Delta_n, \mathcal{B}(\Delta_n),
	\mu_{
		\boldsymbol{\alpha}
	}
) .
\end{equation}

\item
$
\mathbf{Diri}
$
is a category
whose object set is:
\begin{equation}
\label{eq:catDiriObj}
\mathcal{O}_{\mathbf{Diri}}
	:=
\{
	(n, \boldsymbol{\alpha})
\mid
	\boldsymbol{\alpha}
		\in
	\mathbb{R}_{\#}^{n+1}
\}
\end{equation}
and
\begin{equation}
\label{eq:catDiriArrow}
\mathbf{Diri}
\big(
	(m, \boldsymbol{\alpha}),
	(n, \boldsymbol{\beta})
\big)
		:=
\Prob\big(
	\mathbf{p}(m, \boldsymbol{\alpha}),
	\mathbf{p}(n, \boldsymbol{\beta})
\big) .
\end{equation}

\item
The correspondence
$
\mathbf{p}
$
can be thought as a naturally-defined functor:
\[
\mathbf{p}
	:
\mathbf{Diri}
	\to
\Prob .
\]
\end{enumerate}
\end{defn}


We can extend the maps 
$d^n_i$
and
$s^n_j$
defined in
Definition \ref{defn:faceDegeMapsForRzero}
to
\[
d^n_i
	:
\mathbb{R}_+^{n+1}
	\to
\mathbb{R}_+^{n}
	\quad
\textrm{and}
	\quad
s^n_j
	:
\mathbb{R}_+^{n+1}
	\to
\mathbb{R}_+^{n+2},
\]
respectively
by allowing their arguments
$\mathbf{w}$
be any element of 
$
\mathbb{R}_+^{n+1}
$.
In the following proposition, we use these extended maps
$d^n_i$
and
$s^n_j$
for manipulating parameter
$
\boldsymbol{\alpha}
$.

\begin{prop}
\label{prop:DiriMP}
Let
$n \in \mathbb{N}$
and
$
\boldsymbol{\alpha}
	\in
\mathbb{R}_{\#}^{n+1}
$.
Then for
$
i, j \in [n]
$, 
we have
\begin{equation}
\label{eq:DiriMPd}
\mu_{\boldsymbol{\alpha}}
	\circ
\big(
	d^n_i
\big)^{-1}
		=
\mu_{d^n_i(\boldsymbol{\alpha})} 
		\quad
		(\mathrm{if} \; n \ge 1),
\end{equation}
and
\begin{equation}
\label{eq:DiriMPs}
\mu_{\boldsymbol{\alpha}}
	\circ
\big(
	s^n_j
\big)^{-1}
		=
\mu_{s^n_j(\boldsymbol{\alpha})} .
\end{equation}
That is,
both
$
d^n_i
	:
\Delta_{n}
	\to
\Delta_{n-1}
$
and
$
s^n_j
	:
\Delta_n
	\to
\Delta_{n+1}
$
are measure-preserving maps in
$\Prob$.
\end{prop}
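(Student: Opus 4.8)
The plan is to reinterpret the induced measures probabilistically. By Definition \ref{defn:indPMdiri} together with Definition \ref{defn:dirichDist}, the measure $\mu_{\boldsymbol{\alpha}}$ is precisely the law on $(\Delta_n, \mathcal{B}(\Delta_n))$ of a random variable $\mathbf{f} \sim \mathcal{D}_n(\boldsymbol{\alpha})$, where coordinates with vanishing parameter are understood to be degenerate at zero. Under this identification, the pushforward $\mu_{\boldsymbol{\alpha}} \circ (d^n_i)^{-1}$ is simply the law of the transformed variable $d^n_i(\mathbf{f})$, and likewise $\mu_{\boldsymbol{\alpha}} \circ (s^n_j)^{-1}$ is the law of $s^n_j(\mathbf{f})$. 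So first I would record this identification as a preliminary observation, after which both \eqref{eq:DiriMPd} and \eqref{eq:DiriMPs} become statements identifying the laws of transformed Dirichlet variables with Dirichlet laws of the stated parameters.

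For the face maps \eqref{eq:DiriMPd}, the map $d^n_i$ with $i > 0$ merely replaces the adjacent pair of coordinates $(w_{i-1}, w_i)$ by their sum and relabels the remaining coordinates order-preservingly. I would apply the reproductive property (Proposition \ref{prop:DirichletReprod}) with the block partition consisting of all singletons except the single merged block $\{i-1, i\}$; the reproductive property then yields $d^n_i(\mathbf{f}) \sim \mathcal{D}_{n-1}$ with parameter vector equal to the block sums, which is exactly $d^n_i(\boldsymbol{\alpha})$. The case $i = 0$ is the only genuinely different one, since $d^n_0$ merges the non-adjacent coordinates $w_0$ and $w_n$ with a cyclic relabeling; here I would first invoke the exchangeability of the Dirichlet distribution---immediate from the symmetric gamma construction in Definition \ref{defn:dirichDist}, since the $g_j$ are independent up to their shape parameters, so permuting indices permutes the parameter vector---to reorder into $(f_1, \ldots, f_{n-1}, f_0, f_n) \sim \mathcal{D}_n(\alpha_1, \ldots, \alpha_{n-1}, \alpha_0, \alpha_n)$, then merge the now-adjacent final pair to conclude $d^n_0(\mathbf{f}) \sim \mathcal{D}_{n-1}(\alpha_1, \ldots, \alpha_{n-1}, \alpha_0 + \alpha_n) = \mathcal{D}_{n-1}(d^n_0(\boldsymbol{\alpha}))$.

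For the degeneracy maps \eqref{eq:DiriMPs}, $s^n_j$ inserts an identically-zero coordinate in position $j$, embedding $\Delta_n$ onto the face $\{x_j = 0\}$ of $\Delta_{n+1}$. Hence $s^n_j(\mathbf{f})$ has $j$-th coordinate identically zero and remaining coordinates distributed as $\mathbf{f}$, which by the degeneracy convention is precisely $\mathcal{D}_{n+1}(s^n_j(\boldsymbol{\alpha}))$, whose inserted $j$-th parameter vanishes. To make this airtight against Definition \ref{defn:indPMdiri}, I would check that the reduction data for $s^n_j(\boldsymbol{\alpha})$---the zero-set $Z$, the trimmed Borel set $\tilde{A}$, and the order-preserving projection $\pi$---differ from those of $\boldsymbol{\alpha}$ only by the extra index $j$, so that both $\mu_{\boldsymbol{\alpha}}$ and $\mu_{s^n_j(\boldsymbol{\alpha})}$ reduce to the identical Dirichlet density $p_{\mathcal{D}_m}$ on $\Delta_m$. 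The closing assertion that $d^n_i$ and $s^n_j$ are measure-preserving in $\Prob$ is then immediate, since equality of the pushforward with the target measure is the definition of measure-preservation.

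The main obstacle I anticipate is bookkeeping rather than conceptual: handling the cyclic $i = 0$ face map cleanly via exchangeability, since Proposition \ref{prop:DirichletReprod} as stated aggregates only contiguous blocks, and---more delicately---matching the coordinate-reduction machinery of Definition \ref{defn:indPMdiri} with the pushforward when $\boldsymbol{\alpha}$ already contains zero entries, so that one must track how inserting or merging coordinates interacts with the pre-existing degenerate directions and with the projection $\pi$.
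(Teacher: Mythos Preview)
Your proposal is correct and follows the same route as the paper: invoke the reproductive property of the Dirichlet distribution (Proposition~\ref{prop:DirichletReprod}) together with the explicit coordinate formulas for $d^n_i$ and $s^n_j$ (Definition~\ref{defn:faceDegeMapsForRzero}). The paper's proof is a one-line ``almost obvious by Definition~\ref{defn:faceDegeMapsForRzero} and Proposition~\ref{prop:DirichletReprod},'' so your write-up is considerably more detailed than what the paper provides; in particular, your observation that the $i=0$ case requires an exchangeability step (since Proposition~\ref{prop:DirichletReprod} aggregates only contiguous blocks) and your attention to the interaction with pre-existing zero entries in $\boldsymbol{\alpha}$ are genuine gaps in the paper's terse argument that you have filled in.
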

\[
\xymatrix@C=40 pt@R=10 pt{
	[n-1]
        \ar @{->}^{\delta^n_{i}} [r]
&
	[n]
&
	[n+1]
        \ar @{->}_{\sigma^n_{j}} [l]
\\
	\Delta_{n-1}
&
	\Delta_{n}
        \ar @{->}_{d^n_{i}} [l]
        \ar @{->}^{s^n_{j}} [r]
&
	\Delta_{n+1}
\\
	\mathcal{B}(\Delta_{n-1})
        \ar @{->}^{(d^n_{i})^{-1}} [r]
        \ar @{->}_{\mu_{d^n_i(\boldsymbol{\alpha})}} [ddr]
&
	\mathcal{B}(\Delta_n)
        \ar @{->}_{\mu_{\boldsymbol{\alpha}}} [dd]
&
	\mathcal{B}(\Delta_{n+1})
        \ar @{->}_{(s^n_{j})^{-1}} [l]
        \ar @{->}^{\mu_{s^n_j(\boldsymbol{\alpha})}} [ddl]
\\\\
&
	[0,1]
}   
\]
\begin{proof}
Almost obvious by
Definition \ref{defn:faceDegeMapsForRzero}
and
Proposition \ref{eq::DirichletReprod}.
\end{proof}

\begin{defn}{[Dirichlet functor]}
\label{defn:diriFunc}
A \newword{Dirichlet functor}
$D$
is a functor
\[
D :
\bSigma^{op}
	\to
\mathbf{Diri}
\]
such that
\begin{enumerate}
\item
$
D 
\twc{t}{c}
$
is of the form
$
(t, \boldsymbol{\alpha})
$
with
$
\boldsymbol{\alpha}
	\in
\mathbb{R}_{\#}^{t+1}
$,

\item
For
$t \ge 1$,
$
D
\twc{t-1}{c}
	=
\big(
t-1, 
d_{c_t}^t(
	\boldsymbol{\alpha}
)
\big)
$
if
$
D
\twc{t}{c}
	=
(
	t, 
	\boldsymbol{\alpha}
)
$.

\item
The following diagram commutes:
\[
\xymatrix@C=30 pt@R=30 pt{
	\bSigma^{op}
        \ar @{->}^{D} [r]
        \ar @{->}_{U_{\Sigma \Delta}} [d]
&
	\mathbf{Diri}
        \ar @{->}^{\mathbf{p}} [r]
&
	\Prob
        \ar @{->}^{U_{PM}} [d]
\\
	\bDelta^{op}
        \ar @{->}^{\mathcal{R}_0} [r]
&
	\Top
        \ar @{->}^{\mathcal{B}} [r]
&
	\Mble
}   
\]

\end{enumerate}

\end{defn}

Note that
$
\tilde{D}
	:=
\mathbf{p}
	\circ
D
$
is a 
$\bSigma$-filtration.
We call this 
$
\tilde{D}
$
a \newword{Dirichlet filtration}.

\lspace

For a Dirichlet functor
$D$,
if we know the value of
$
D \twc{t}{c}
$
for some
$
\twc{t}{c}
	\in
\Sigma
$,
then 
any past
$
D \twc{s}{c'}
$,
where
$
s < t
$
and
$
c' \sim_{t+1} c
$,
is completely determined
by the condition mentioned in
Definition \ref{defn:diriFunc} (2).
However, future is not like this in general.

\begin{Note}
\label{Note:posFutPara}

Assume that
$
D(\twc{t}{c}
	=
(t, \boldsymbol{\alpha}) .
$
Then,
the set of possible 
$
D(\twc{t+1}{c})
$
is:
\begin{equation}
\label{eq:nextDt}
\textred{
\mathbf{next}_c
}
( 
	(t, \boldsymbol{\alpha})
)
	:=
\{
	(t+1, \boldsymbol{\beta})
\mid
	\boldsymbol{\beta}
		\in
	\mathbb{R}_{\#}^{t+2},
\;
	d^{t+1}_k(
		\boldsymbol{\beta}
	)
		=
	\boldsymbol{\alpha}
\} ,
\end{equation}
where
$
\textred{
k
}
	:=
c_{t+1} .
$
\[
\xymatrix@C=20 pt@R=20 pt{
	\bSigma^{op}
        \ar @{->}_{D} [d]
&
	\twc{t}{c}
        \ar @{->}^{\delta^{t+1}_{c}} [rr]
&&
	\twc{t+1}{c}
\\
	\mathbf{Diri}
&
	(t, \boldsymbol{\alpha})
&&
	(t+1, \boldsymbol{\beta})
        \ar @{->}_{d^{t+1}_{c}} [ll]
}
\]

So, if
$k > 0$,
$
\beta_{k-1}
$
and
$
\beta_{k}
$
can take any values as long as they satisfy the equation:
\[
\beta_{k-1}
	+
\beta_{k}
	=
\alpha_{k-1}.
\]

This fact can be considered as a source of uncertainty laid in future,
which we may call
\newword{parameter uncertainty}.

\end{Note}

\begin{figure*}[tb]

\begin{tikzpicture}
	\draw [->, very thin] (1, -5.3) -- (1,0) coordinate(g) node[right] {time};

	\draw (1.1, -2.5) coordinate(g) node[right] {$t$};
	\node at (1, -2.5)[circle,fill,inner sep=1.5pt]{};



	\draw (3,0) arc (170:10:1.5cm and 0.4cm)coordinate[pos=0] (a);
	\draw (3,0) arc (-170:-10:1.5cm and 0.4cm)coordinate (b);
	\draw (a) -- ([yshift=-2.5cm]$(a)!0.5!(b)$) -- (b);

	\draw (4.5, -2.5) coordinate(g) node[right] {\textred{now} $:= \twc{t}{c}$};
	
	\draw[dashed] (3,-5) arc (170:10:1.5cm and 0.4cm)coordinate[pos=0] (c);
	\draw (3,-5) arc (-170:-10:1.5cm and 0.4cm)coordinate (d);
	\draw (c) -- ([yshift=2.5cm]$(c)!0.5!(d)$) -- (d);

	\draw (3.0,-6) node[right] {uncertainty model};

	\draw (6.5, -1.5) coordinate(g) node[right] {\textblue{parameter uncertainty}};
	\draw (6.5, -3.5) coordinate(g) node[right] {\textblue{contextual uncertainty}};

\end{tikzpicture}

\caption{Contextual uncertainty and parameter uncertainty}

\label{fig:contexualVsParameterUnvertainty}
\end{figure*}

\subsection{Bayesian consideration}
\label{sec:bayesianFun}
\nocite{bernardo_smith2000}


In this subsection,
we will see that
a Bayesian procedure can be seen as an update of Dirichlet functor.

Let
$
\textblue{D}
	:
\bSigma \to \mathbf{Diri}
$
be a fixed 
Dirichlet functor.

Suppose that
a \textblue{multinomial distribution}
$
\textred{
M_t
}
(\mathbf{q})
$
is assigned
to each time in a context
$
\textblue{
\twc{t}{c}
}
$,
whose probability distribution is determined by
for
$
\textred{
\mathbf{r}
}
	=
(r_0, \cdots, r_t)
	\in
\mathbb{N}^{t+1}
$,
\[
\textred{
p_{M_t(\mathbf{q})}
}
(\mathbf{r})
	:=
\binom{
	\sum_{i=0}^t r_i
}{
	r_0 \cdots r_t
}
\prod_{i=0}^t
	q_i^{r_i}
	=
\binom{
	\sum_{i=0}^t r_i
}{
	r_0 \cdots r_t
}
\mathbf{b}_n(
	\mathbf{q},
	\mathbf{r} + 1
)
\]
and
$
\textred{
\mathbf{q}
}
	=
(q_0, \cdots, q_t)
	\in
\Delta_t
$.

Moreover,
$
\mathbf{q}
$
itself
is a random variable having the \textblue{uniform distribution} in the probability space
\[
\tilde{D} \twc{t}{c}
	=
\mathbf{p}((t, 
\textred{
	\boldsymbol{\alpha}
}
	))
	=
(
	\Delta_t,
	\mathcal{B}(\Delta_t), 
	\textblue{
	\mu_{\boldsymbol{\alpha}}) .
	}
\]

\lspace

Now,
suppose that
we observe 
$
\textred{
k
}
	\in
[t]
$
at
$
\twc{t}{c}
$
as 
an \textblue{outcome}
of the multinomial distribution
$
M_t(\mathbf{q})
$.

Then,
the Dirichlet functor
$D$
will be updated to
another Dirichlet functor
$
\textred{
D'
}
$
in the following manner.
\begin{enumerate}
\item
At
$
\twc{t}{c}
$.
Since the Dirichlet distribution is a 
conjugate prior
of the multinomial distribution,
$
D'[t]_c
	:=
(t, 
	\boldsymbol{\alpha}'
)
$,
where
\[
\textred{
\boldsymbol{\alpha}'
}
	:=
(
\alpha_0, \cdots, \alpha_{k-1},
	\alpha_k \textblue{+ 1},
\alpha_{k+1}, \cdots, \alpha_t
).
\]

\item
For a past
$
\twc{s}{c'}
$
where
$s < t$
and
$
c' \sim_{t+1} c
$,
we can \textblue{uniquely} determine
$
D'[s]_{c'}
	=
(s, \textred{\boldsymbol{\beta}})
$
by iterating use of updating procedure of the parameter
with the face maps
$
d^t_{c}
$.

\item
For a future
$
\twc{u}{c}
$
where
$u > t$,
we can pick any value from the set calculated by
$\mathbf{next}_c$
in (\ref{eq:nextDt}),
which leads a \textblue{parameter uncertainty}.

\end{enumerate}

\begin{exmp}[Bayesian update on $\Delta_2$]

Let
$t = 2$.
Fix a context $c$ and consider time 
$\twc{t}{c}$ 
with state space 
$
\Delta_2=\{(x_0,x_1,x_2): x_i\ge 0,\ \sum_i x_i=1\}
$.
Let the prior on 
$\Delta_2$ 
be Dirichlet with parameter 
$
\boldsymbol{\alpha}
	=
(1,1,1)
$ 
(the uniform prior):
\[
q
	=
(q_0,q_1,q_2)
	\sim 
\mathrm{Dir}(1,1,1),
	\qquad 
\mathbb{E}[q_i]=\tfrac{1}{3}.
\]
At time $\twc{t}{c}$ 
we observe one multinomial trial with outcome ``category 1’’ (i.e., $k=1$). 
By conjugacy, the posterior becomes
\[
q\,|\,k=1\ \sim\ \mathrm{Dir}(1,\,1+1,\,1) = \mathrm{Dir}(1,2,1),
\]
so the updated expectation shifts to
\[
\mathbb{E}[q_0\,|\,k=1]=\tfrac{1}{4},\quad 
\mathbb{E}[q_1\,|\,k=1]=\tfrac{1}{2},\quad 
\mathbb{E}[q_2\,|\,k=1]=\tfrac{1}{4}.
\]

\emph{Filtration view.} 
Write 
$
D([t]_c)
	=
(2,
\boldsymbol{\alpha}
)
$
to indicate ``dimension $2$ with parameter 
$
\boldsymbol{\alpha}
$'' 
at time-in-context 
$\twc{t}{c}$. 
The Bayesian update defines a new Dirichlet functor $D'$ with
\[
D'([2]_c)
	=
(
	2,
	\boldsymbol{\alpha}'
)
	\quad\text{where}\quad
\boldsymbol{\alpha}'
	=
(1,2,1).
\]
For any past 
$\twc{s}{c'}$
 with 
$s < t$
and
$c'\sim_{t+1} c$, 
the value 
$D'(\twc{s}{c'})$ 
is determined functorially by composing with the appropriate face maps 
$d^t_c$
 (iterated as needed), 
i.e., by pushing the update backward along 
$\bSigma$’s arrows:
\[
D'(\twc{s}{c'})
	:=
\big(
	s, 
		\,
	d^{s+1}_{c'}(
		d^{s+2}_{c'}(
			\cdots
			d^{t}_{c'}(
				\boldsymbol{\alpha}'
			)
			\cdots
		)
	)
\big).
\]
Intuitively, 
the posterior information at $\twc{t}{c}$ restricts to coarser simplices (faces) in the past via the face maps.

\emph{Future uncertainty (parameter vs.\ context).} 
At 
$\twc{t+1}{c}$, 
the next-time parameter 
$
\boldsymbol{\beta}
$ 
must satisfy 
$
d^{\,t+1}_{k}(
	\boldsymbol{\beta}
)
	=
\boldsymbol{\alpha}'
$
where 
$
k
	=
c_{t+1}
$
(the next-time face designated by the context). 
If 
$
k \in \{1,2\}
$, 
this constraint only fixes the \emph{sum} of two adjacent coordinates of 
$
\boldsymbol{\beta}
$, 
leaving those two entries free to vary 
as long as they add to the corresponding entry of 
$
\boldsymbol{\alpha}'
$. 
Thus 
the future exhibits \emph{parameter uncertainty} even after we condition on the observed outcome; 
varying $c$ further reflects \emph{contextual uncertainty} about which face is selected.

\end{exmp}

\section{Conclusion}
\label{sec:conclusion}

In this work
we proposed a geometric and homological framework for \emph{synthetic filtrations},
extending the classical notion of filtrations in probability theory.
By introducing the category $\bSigma$,
which enriches the simplex category $\bDelta$
with context-dependent time,
we captured the idea that
the present may arise from the synthesis of multiple possible pasts.

The homological analysis of $\bSigma$-filtrations
revealed that conditional expectations can be organized into a chain complex,
leading to homology groups that quantify structural inconsistencies
or redundancies within probabilistic evolution.
These ``probabilistic homology classes’’
express how information fails to glue perfectly across contexts,
providing an algebraic measure of uncertainty beyond variance or entropy.

As a concrete realization,
we developed Dirichlet filtrations,
where probability measures on simplices are induced by Dirichlet distributions.
This construction highlighted two complementary sources of uncertainty:
parameter uncertainty, reflecting stochastic variability within models,
and contextual uncertainty, arising from the multiplicity of possible pasts.
Furthermore,
we showed that Bayesian updating can be formulated
as a categorical 
transformation of a Dirichlet functor,
offering a new interpretation of learning within our framework.

The theory of synthetic filtrations 
thus unites geometric, homological, and Bayesian viewpoints.
It opens several directions for further research. 
One avenue is the extension to continuous time 
and its relationship with classical stochastic processes
though there need some jumps to find appropriate continuous geometric structures. 
Another is 
the study of higher-dimensional homology invariants of $\bSigma$-filtrations,
potentially connecting with homotopy theory. 
Finally, applications in finance and economics, 
where information often arrives in context-sensitive and non-linear ways, 
represent a promising field of study.

Overall, 
synthetic filtrations provide a unifying geometric and categorical language 
for modeling time, uncertainty, and information, 
enriching both the theory of probability and its applications.


\section*{Acknowledgement}
I thank ChatGPT, a language model developed by OpenAI, 
for their assistance in exploring 
homological structures 
in probability theory,
I also thank Dr. Keisuke Hara of Association of Mathematical Finance Laboratory for his suggestion to use Beta distribution (or Dirichlet distribution) as a probability measure on the standard simplex.





\end{document}